\crefname{equation}{}{}{}
\newtheorem{theorem}{Theorem}[section]
\newtheorem{corollary}[theorem]{Corollary}
\newtheorem{remark}[theorem]{Remark}
\newtheorem{proposition}[theorem]{Proposition}
\newtheorem{assumption}[theorem]{Assumption}
\numberwithin{equation}{section}
\newenvironment{proof}[1][Proof]{\noindent\textbf{#1.} }{\hfill $\Box$}
\allowdisplaybreaks \numberwithin{equation}{section}
\makeatletter\setlength{\textwidth}{16.0cm}
\begin{document}
\author{Dongsheng Li, Yasheng Lyu \\
School of Mathematics and Statistics, Xi'an Jiaotong University,\\
 Xi'an, Shaanxi 710049, People's Republic
of China }
\title{\textbf{A new proof of H\"older estimates for the gradient of quasilinear elliptic equations} }
 \date{}
\maketitle

\begin{abstract}
In this paper, we give a new proof of H\"older estimates for the gradient of quasilinear elliptic equations, using a covering method inspired by the proof of Evans-Krylov theorem for fully nonlinear elliptic equations.
Moreover, H\"older estimates for the gradient of fully nonlinear elliptic equations are also obtained by the same method. 

\noindent\textbf{Keywords}: Elliptic equation, $C^{1,\alpha}$ estimate. 
\\
\textbf{AMS Subject Classification (2020)}: 
\end{abstract}

\section{Introduction}

For quasilinear elliptic equations in nondivergence form 
\begin{equation}\label{eqn1}
A^{ij}(x,u,Du)D_{ij}u+B(x,u,Du)=0\ \ \text{in}\ \Omega,
\end{equation}
H\"older estimates for the gradient $Du$ were first established by Ladyzhenskaya and Ural'tseva \cite{LU1,LU2,LU3}. 
These foundational estimates enable the application of Schauder theory to obtain higher-order estimates provided that the equation is sufficiently smooth.
Later, Trudinger \cite{Trudinger} established certain Harnack-type inequalities and gave an alternative proof of the gradient H\"older estimates. 

As a counterpart, for fully nonlinear elliptic equations, Evans \cite{Evans} and Krylov \cite{Krylov1,Krylov2} independently proved ‌the‌ Evans-Krylov theorem, establishing H\"older estimates for the Hessian $D^{2}u$.
This foundational result allows the application of Schauder theory to achieve  higher-order estimates provided that the equation is sufficiently smooth. 
Subsequently, Caffarelli and Cabr\'e \cite{Caffarelli} presented‌ a proof of ‌the‌ Evans-Krylov theorem using a covering method‌. 
Caffarelli and Silvestre \cite{Caffarelli2010,Caffarelli2011} established the Evans-Krylov theorem for nonlocal fully nonlinear equations and also derived  another proof of the‌ Evans-Krylov theorem. 

In this paper, we adapt the covering method \cite{Caffarelli} to provide a new proof of H\"older estimates for the gradient $Du$ of nonlinear elliptic equations.
Specifically, Caffarelli and Cabr\'e \cite{Caffarelli} covered $D^{2}u(B_{1})$ in the space of real symmetric $n\times n$ matrices with ‌a finite number of‌ balls of universal radius.
The number of required covering balls can be further reduced to cover $D^{2}u(B_{\delta})$.
By analyzing the oscillation decay of $D^{2}u$ as the radius of the domain ball diminishes‌, the H\"older continuity of $D^{2}u$ is established‌.
Throughout this paper, $\Omega$ is assumed to be a smooth bounded domain. 

Assume that the coefficients of equation \eqref{eqn1} satisfy 
\begin{equation}\label{eqn6}
\text{matrix}\ [A^{ij}]\ \text{is positive definite},\ 
A^{ij}\in C^{1},\ B\in C^{0}\ \ \text{in}\ \Omega\times\mathbb{R}\times\mathbb{R}^{n},
\end{equation}
which implies that there is a continuous function $\theta_{1}$ such that for any constant $\rho>0$,
\begin{equation}\label{eqn9}
\frac{\left|A^{ij}\right|+\left|D_{p_{k}}A^{ij}\right|+\left|D_{z}A^{ij}\right|+\left|D_{x_{k}}A^{ij}\right|+|B|}{\lambda}\leq\theta_{1}(\rho)
\end{equation}
in $\left\{(x,z,p)\in\Omega\times\mathbb{R}\times\mathbb{R}^{n}:\ |z|+|p|\leq\rho\right\}$, where $\lambda$ is the minimal eigenvalue of $[A^{ij}]$.

\begin{theorem}\label{thm1.11}
Let $u\in C^{1}(\Omega)\cap W^{2,2}(\Omega)$ satisfy the equation \eqref{eqn1}, and let the condition \eqref{eqn6} hold. 
Then we have H\"older seminorm estimates
\[
[Du]_{\alpha;\Omega_{d}}\leq Cd^{-\alpha},
\]
where $\Omega_{d}:=\{x\in\Omega:\ \operatorname{dist}(x,\partial\Omega)>d\}$,  $C=C\big(n,\|Du\|_{C^{0}(\Omega_{d\hspace{-0.15mm}/\hspace{-0.2mm}2})},\operatorname{diam}\Omega\big)$, and $\alpha\in(0,1)$ depending on $n$, $\operatorname{diam}\Omega$, $\theta_{1}$ and  $\|u\|_{C^{1}(\Omega_{d\hspace{-0.15mm}/\hspace{-0.2mm}2})}$.  
\end{theorem}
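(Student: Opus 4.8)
The plan is to adapt the Caffarelli--Cabr\'e proof of Evans--Krylov, working now in the gradient space $\mathbb{R}^{n}$ in place of the space of symmetric matrices: cover $Du(B_{1})$ by finitely many balls of a chosen radius, show that on a smaller concentric ball the image $Du(B_{\mu})$ can be covered by strictly fewer of them, and iterate to force a geometric decay of $\operatorname*{osc}Du$. By the usual rescaling this reduces to a fixed ball: for $x_{0}\in\Omega_{d}$ the function $\tilde u(y):=\tfrac{2}{d}u\big(x_{0}+\tfrac{d}{2}y\big)$ solves on $B_{1}$ an equation of the same structure, with $\|D\tilde u\|_{C^{0}(B_{1})}\le\|Du\|_{C^{0}(\Omega_{d/2})}$ and structural function controlled by $\theta_{1}$ on $\{|z|+|p|\le\|u\|_{C^{1}(\Omega_{d/2})}\}$, the $x$-derivatives of $A^{ij}$ bringing in $\operatorname{diam}\Omega$. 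An estimate $[D\tilde u]_{\alpha;B_{1/2}}\le C$ for such normalized solutions rescales to $[Du]_{\alpha;B_{d/4}(x_{0})}\le C(d/2)^{-\alpha}$, and a chaining argument over such balls (using the trivial bound from $\|Du\|_{C^{0}}$ for pairs of points at distance $\gtrsim d$) gives the theorem. Since $u\in C^{1}$, $D\tilde u$ is already continuous, so all the $\operatorname*{osc}$ quantities below are finite.

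The analytic core is a one-step frozen-coefficient comparison. Fix $z\in B_{1/2}$, $r$ small, freeze $a^{ij}:=A^{ij}(z,\tilde u(z),D\tilde u(z))$, and set $w:=\tilde u-\tilde u(z)-D\tilde u(z)\cdot(\cdot-z)$, so $a^{ij}D_{ij}w=f$ in $B_{r}(z)$ with, by \eqref{eqn6},
\[
\|f\|_{L^{2}(B_{r}(z))}\ \le\ \omega\big(r+\operatorname*{osc}_{B_{r}(z)}D\tilde u\big)\,\|D^{2}\tilde u\|_{L^{2}(B_{r}(z))}\ +\ Cr^{n/2},
\]
$\omega$ being the modulus of continuity of $[A^{ij}]$. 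Comparing $w$ on $B_{r/2}(z)$ with the solution $\bar w$ of the homogeneous constant-coefficient equation having the same boundary values, using interior estimates for $\bar w$ and an energy estimate for $w-\bar w$, one gets for every small $\theta$
\[
\operatorname*{osc}_{B_{\theta r}(z)}D\tilde u\ \le\ C_{0}\,\theta\,\operatorname*{osc}_{B_{r/2}(z)}D\tilde u\ +\ \mathcal{E}(z,r),
\]
where $\mathcal{E}(z,r)\le C\,\omega\big(r+\operatorname*{osc}_{B_{r}(z)}D\tilde u\big)\,r^{1-n/2}\|D^{2}\tilde u\|_{L^{2}(B_{r}(z))}+Cr$: away from $\mathcal{E}$, the image $D\tilde u(B_{\theta r}(z))$ lies in the ball of radius $C_{0}\theta\operatorname*{osc}_{B_{r/2}(z)}D\tilde u$ about $D\tilde u(z)$.

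Now the covering step. The set $D\tilde u(B_{r})\subset\mathbb{R}^{n}$ is bounded and connected; cover it by $N$ closed balls $\overline{B_{\rho}(p_{k})}$ and let $A_{k}:=\{x\in B_{r}:D\tilde u(x)\in\overline{B_{\rho}(p_{k})}\}$. These cover $B_{r}$, so some $A_{k^{*}}$ has $|A_{k^{*}}|\ge|B_{r}|/N$, and on it $|D\tilde u-p_{k^{*}}|\le\rho$. Plugging this into the one-step comparison together with a Krylov--Safonov-type measure estimate — the measure of $A_{k^{*}}$ forces $|D\tilde u-p_{k^{*}}|\le C\rho$ on the concentric sub-ball $B_{\mu r}(x_{0})$, with $\mu$ a fixed constant depending only on $n,\theta_{1},\|u\|_{C^{1}(\Omega_{d/2})},\operatorname{diam}\Omega$, provided $r$ is small and $\rho$ exceeds a fixed multiple of $\mathcal{E}$ — shows $D\tilde u(B_{\mu r}(x_{0}))$ is covered by at most $N-1$ of the (mildly enlarged) balls. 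Iterating this at most $N$ times traps $D\tilde u$ in a single ball of radius $C\rho$ on $B_{\mu^{N}r}(x_{0})$, so $\operatorname*{osc}_{B_{\mu^{N}r}}D\tilde u\le C\rho$; choosing $\rho$, as a function of $r$, just above $\mathcal{E}$ turns this into $\operatorname*{osc}_{B_{\tau r}}D\tilde u\le\tfrac12\operatorname*{osc}_{B_{r}}D\tilde u+Cr^{\gamma}$ for fixed $\tau\in(0,1)$, $\gamma>0$, and the standard iteration yields $[D\tilde u]_{\alpha;B_{1/2}}\le C$.

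The main obstacle is that $\mathcal{E}$ is self-referential: through $\omega$ it involves $\operatorname*{osc}_{B_{r}(z)}D\tilde u$, the very quantity being estimated, and it involves $r^{1-n/2}\|D^{2}\tilde u\|_{L^{2}(B_{r}(z))}$, for which $u\in W^{2,2}$ supplies no quantitative decay — one cannot short-circuit this with $W^{2,p}$ theory plus Sobolev embedding, since the $W^{2,p}$ constants themselves depend on the a priori unknown modulus of $D\tilde u$. The reason to run the covering rather than a bare improvement of oscillation is precisely this robustness: the covering number $N$ is a nonnegative integer that can only decrease, each decrease needs only $\mathcal{E}$ small relative to the freely chosen $\rho$, and the $O(\rho)$ errors accumulated over the at most $N$ steps stay $O(\rho)$ because $N$ is controlled. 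One therefore first extracts a modulus of continuity for $D\tilde u$ (so that $\omega\big(r+\operatorname*{osc}_{B_{r}(z)}D\tilde u\big)\to0$ and, via a Calder\'on--Zygmund estimate at scale $r$ with the coefficient oscillation then negligible, $r^{1-n/2}\|D^{2}\tilde u\|_{L^{2}(B_{r})}\lesssim\operatorname*{osc}_{B_{2r}}D\tilde u+r$), and a second pass then closes the improvement-of-oscillation quantitatively; arranging this bootstrap so that every constant depends only on the quantities listed in the theorem is the technical heart of the proof.
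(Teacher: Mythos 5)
Your proposal departs from the paper in a fundamental way, and the departure opens a gap that the proposal itself acknowledges but does not close. The paper's engine is \emph{not} a frozen-coefficient perturbation argument. It exploits the classical Ladyzhenskaya--Ural'tseva / Gilbarg--Trudinger (Chapter 13.3) fact that the scalar functions $v^{k,\gamma}=\gamma D_{k}u+|Du|^{2}$ are generalized subsolutions of a linear divergence-form equation whose coefficients (after the exponential change of variable) are merely \emph{bounded} in terms of $\theta_{1}$ and $\|u\|_{C^{1}}$, with no smallness or modulus-of-continuity hypothesis on them whatsoever. Consequently $V^{k}_{\pm}-v^{k}_{\pm}$ are nonnegative supersolutions and the \emph{weak Harnack inequality} (GT Theorem 8.18) supplies the measure-to-infimum step directly, with no error term of the self-referential kind you call $\mathcal{E}$. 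The covering is run in the image space $\mathbb{R}^{n}$ with respect to the tailor-made metric $\operatorname{dist}_{\gamma_{*}}(p,q)=\sup_{e}\big|\gamma_{*}(p-q)\cdot e+|p|^{2}-|q|^{2}\big|$, chosen precisely so that small $\operatorname{dist}_{\gamma_{*}}$-balls correspond to small oscillation of all the subsolutions $v^{k,\gamma}_{\pm}$ simultaneously; this is the device that replaces the matrix-norm geometry of Caffarelli--Cabr\'e and makes the ``drop one covering ball'' step go through.

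The gap in your version is the circularity you flag at the end. Your one-step estimate
\[
\operatorname*{osc}_{B_{\theta r}}D\tilde u\ \le\ C_{0}\theta\operatorname*{osc}_{B_{r/2}}D\tilde u\ +\ \mathcal{E}(z,r),\qquad
\mathcal{E}\lesssim\omega\!\big(r+\operatorname*{osc}_{B_{r}}D\tilde u\big)\,r^{1-n/2}\|D^{2}\tilde u\|_{L^{2}(B_{r})}+r,
\]
cannot be iterated: to make $\omega(\cdot)$ small you need a modulus of continuity for $D\tilde u$; to make $r^{1-n/2}\|D^{2}\tilde u\|_{L^{2}(B_{r})}$ decay you invoke a Calder\'on--Zygmund bound ``with the coefficient oscillation negligible,'' which again presupposes a modulus for $D\tilde u$. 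Under the hypotheses $u\in C^{1}\cap W^{2,2}$ and \eqref{eqn6} there is no independent source for that modulus — it is exactly the conclusion of the theorem — and your sketch of a ``first pass'' that would extract it is not given. There is also a secondary issue: with $f$ only in $L^{2}$ and $n\ge3$, the comparison of $w$ with the constant-coefficient harmonic replacement gives an $L^{2}$ control of $D(w-\bar w)$, not the $L^{\infty}$ control your oscillation inequality requires, so even a single step of the perturbation is not justified at this regularity. The paper sidesteps both problems at once: the weak Harnack inequality is a zeroth-order tool that works for supersolutions of divergence-form equations with merely bounded measurable coefficients, so nothing about the regularity of $A^{ij}(\cdot,u,Du)$ beyond boundedness is ever used, and no $W^{2,p}$ or CZ step is needed. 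If you want to stay with a perturbation scheme you would need either stronger hypotheses (e.g.\ $A^{ij}$ depending only on $x$ with a Dini modulus, or an a priori $C^{1,\beta}$ bound to bootstrap from) or to switch, as the paper does, to the subsolution/weak-Harnack structure that is the actual content of Chapter 13.3 of \cite{GT}.
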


For fully nonlinear elliptic equations, H\"older estimates for the gradient $Du$ can be obtained using arguments similar to those in the case of quasilinear elliptic equations.
We will discuss this in Section 5.

\section{A distance on $\mathbb{R}^{n}$}

For any constant $\gamma\neq0$, define
\begin{equation}\label{eqn2.111}
\operatorname{dist}_{\gamma}(x_{1},x_{2}):=\sup_{e\in\mathbb{S}^{n-1}}\left|\gamma(x_{1}-x_{2})\cdot e+|x_{1}|^{2}-|x_{2}|^{2}\right|,\ \ \forall x_{1},\ x_{2}\in\mathbb{R}^{n},
\end{equation}
where $\mathbb{S}^{n-1}$ denotes the unit sphere in $\mathbb{R}^{n}$. 

\begin{proposition}\label{lem11}
$\operatorname{dist}_{\gamma}(\cdot,\cdot)$ is a distance on $\mathbb{R}^{n}$.
\end{proposition}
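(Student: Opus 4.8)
The plan is to verify the four metric axioms directly from the definition \eqref{eqn2.111}. The key observation that organizes everything is that, for a fixed direction $e\in\mathbb{S}^{n-1}$, the quantity inside the absolute value telescopes: setting $g_e(x):=\gamma\,x\cdot e+|x|^2$ we have $\gamma(x_1-x_2)\cdot e+|x_1|^2-|x_2|^2=g_e(x_1)-g_e(x_2)$, so that $\operatorname{dist}_\gamma(x_1,x_2)=\sup_{e\in\mathbb{S}^{n-1}}|g_e(x_1)-g_e(x_2)|$. From this, finiteness and non-negativity are immediate: for every $e$ one has $|g_e(x_1)-g_e(x_2)|\le|\gamma|\,|x_1-x_2|+\big||x_1|^2-|x_2|^2\big|$ uniformly in $e$, so the supremum over the compact sphere is a finite non-negative number. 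Symmetry is also immediate, since $g_e(x_1)-g_e(x_2)=-\bigl(g_e(x_2)-g_e(x_1)\bigr)$ does not change the absolute value.

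Next I would treat the identity of indiscernibles. If $\operatorname{dist}_\gamma(x_1,x_2)=0$, then $g_e(x_1)=g_e(x_2)$ for every $e\in\mathbb{S}^{n-1}$, i.e. $\gamma(x_1-x_2)\cdot e=|x_2|^2-|x_1|^2$ for all such $e$. Applying this with $e$ and with $-e$ and adding gives $|x_1|^2=|x_2|^2$, whence $\gamma(x_1-x_2)\cdot e=0$ for all $e\in\mathbb{S}^{n-1}$; since $\gamma\neq0$ this forces $x_1=x_2$. The reverse implication $\operatorname{dist}_\gamma(x,x)=0$ is trivial.

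Finally, the triangle inequality follows by fixing $x_1,x_2,x_3$ and $e\in\mathbb{S}^{n-1}$ and using the telescoping identity together with the ordinary triangle inequality on $\mathbb{R}$:
\[
|g_e(x_1)-g_e(x_3)|\le|g_e(x_1)-g_e(x_2)|+|g_e(x_2)-g_e(x_3)|\le\operatorname{dist}_\gamma(x_1,x_2)+\operatorname{dist}_\gamma(x_2,x_3),
\]
and then taking the supremum over $e\in\mathbb{S}^{n-1}$ on the left-hand side.

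I do not expect a real obstacle here; the only mildly non-automatic point is the identity of indiscernibles, where one must use that the supremum in \eqref{eqn2.111} ranges over all of $\mathbb{S}^{n-1}$, in particular over antipodal pairs, in order to decouple the linear term from the quadratic term. An alternative, perhaps cleaner, route is to first record the closed form $\operatorname{dist}_\gamma(x_1,x_2)=|\gamma|\,|x_1-x_2|+\big||x_1|^2-|x_2|^2\big|$, which follows from the elementary identity $\sup_{e\in\mathbb{S}^{n-1}}|a\cdot e+c|=|a|+|c|$; since the right-hand side is the sum of a positive multiple of the Euclidean metric and the pullback of the Euclidean metric on $\mathbb{R}$ by $x\mapsto|x|^2$, all four axioms become transparent, the separation property using again that $|\gamma|>0$.
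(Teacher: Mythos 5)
Your main proof takes essentially the same route as the paper: direct verification of the metric axioms, with the triangle inequality following from the telescoping form of the expression inside the supremum. The only notable deviation is in the identity of indiscernibles, where you average over the antipodal pair $e$ and $-e$ to isolate $|x_1|^2=|x_2|^2$, rather than choosing $e\perp(x_1-x_2)$ as the paper does; your variant has the minor advantage of not requiring $n\geq 2$. The alternative you sketch at the end — using $\sup_{e\in\mathbb{S}^{n-1}}|a\cdot e+c|=|a|+|c|$ to obtain the closed form $\operatorname{dist}_\gamma(x_1,x_2)=|\gamma|\,|x_1-x_2|+\bigl||x_1|^2-|x_2|^2\bigr|$ — is genuinely different and arguably cleaner: it exhibits $\operatorname{dist}_\gamma$ as a sum of two pseudometrics, one of which (the $|\gamma|$-scaled Euclidean metric) is already a metric, so all the axioms become transparent at once. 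That closed form would also sharpen Proposition~\ref{pro2.2}, whose two-sided bounds are then immediate consequences. Both arguments are correct.
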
 
\begin{proof} 
First, the symmetry $\operatorname{dist}_{\gamma}(x_{1},x_{2})=\operatorname{dist}_{\gamma}(x_{2},x_{1})\in[0,\infty)$ holds for all $x_{1}$, $x_{2}\in\mathbb{R}^{n}$.
Second, if $\operatorname{dist}_{\gamma}(x_{1},x_{2})=0$, then taking $e\perp(x_{1}-x_{2})$ implies $|x_{1}|=|x_{2}|$, 
and taking $e\parallel(x_{1}-x_{2})$ again yields $x_{1}=x_{2}$.
Third, for any $x_{1}$, $x_{2}$, $x_{3}\in\mathbb{R}^{n}$,
\begin{align*}
\operatorname{dist}_{\gamma}(x_{1},x_{2})&=\sup_{e\in\mathbb{S}^{n-1}}\left|\gamma(x_{1}-x_{3})\cdot e+|x_{1}|^{2}-|x_{3}|^{2}+\gamma(x_{3}-x_{2})\cdot e+|x_{3}|^{2}-|x_{2}|^{2}\right|\\
&\leq\operatorname{dist}_{\gamma}(x_{1},x_{3})+\operatorname{dist}_{\gamma}(x_{3},x_{2}). 
\end{align*} 
The proof of Proposition \ref{lem11} is complete.
\end{proof}
\par
\vspace{2mm}
In this paper, the ball of radius $R$ centered at $x_{0}$ with respect to the Euclidean distance is denoted by $B(x_{0},R)$.
The ball with respect to the distance $\operatorname{dist}_{\gamma}(\cdot,\cdot)$ is defined by 
\[
\mathcal{B}_{\gamma}(x_{0},R):=\left\{x\in\mathbb{R}^{n}:\ \operatorname{dist}_{\gamma}(x_{0},x)<R\right\},
\]
where $x_{0}$ is its center and $R$ is its radius.
For a set $\Sigma\subset\mathbb{R}^{n}$, its diameter with respect to the distance $\operatorname{dist}_{\gamma}(\cdot,\cdot)$ is defined by 
\[
\operatorname{diam}_{\gamma}(\Sigma):=\sup_{x_{1},x_{2}\in\Sigma}\operatorname{dist}_{\gamma}(x_{1},x_{2}).
\]

The distance $\operatorname{dist}_{\gamma}(\cdot,\cdot)$ lacks positive homogeneity, translation invariance and comparability to the Euclidean distance, while it has the following property. 
\begin{proposition}\label{pro2.2}
It holds that
\[
|\gamma||x_{1}-x_{2}|\leq\operatorname{dist}_{\gamma}(x_{1},x_{2})\leq(|\gamma|+|x_{1}|+|x_{2}|)|x_{1}-x_{2}|,\ \ \forall x_{1},\ x_{2}\in\mathbb{R}^{n}.
\]
\end{proposition}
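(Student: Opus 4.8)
The plan is to prove the two inequalities separately, both directly from the definition \eqref{eqn2.111}, with no machinery beyond the Cauchy--Schwarz inequality and the elementary estimate $\big||x_{1}|-|x_{2}|\big|\leq|x_{1}-x_{2}|$.

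For the upper bound, fix an arbitrary $e\in\mathbb{S}^{n-1}$ and apply the triangle inequality inside the absolute value:
\[
\left|\gamma(x_{1}-x_{2})\cdot e+|x_{1}|^{2}-|x_{2}|^{2}\right|\leq|\gamma|\,|(x_{1}-x_{2})\cdot e|+\big||x_{1}|^{2}-|x_{2}|^{2}\big|\leq|\gamma|\,|x_{1}-x_{2}|+\big||x_{1}|^{2}-|x_{2}|^{2}\big|,
\]
using $|e|=1$ in the last step. Then factor $|x_{1}|^{2}-|x_{2}|^{2}=\big(|x_{1}|+|x_{2}|\big)\big(|x_{1}|-|x_{2}|\big)$ and bound $\big||x_{1}|-|x_{2}|\big|\leq|x_{1}-x_{2}|$ to get $\big||x_{1}|^{2}-|x_{2}|^{2}\big|\leq\big(|x_{1}|+|x_{2}|\big)|x_{1}-x_{2}|$. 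Since the resulting bound $\big(|\gamma|+|x_{1}|+|x_{2}|\big)|x_{1}-x_{2}|$ is independent of $e$, taking the supremum over $e\in\mathbb{S}^{n-1}$ yields the right-hand inequality.

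For the lower bound, the case $x_{1}=x_{2}$ is trivial, so assume $x_{1}\neq x_{2}$ and test the supremum with the two antipodal unit vectors $e_{\pm}:=\pm(x_{1}-x_{2})/|x_{1}-x_{2}|$. Setting $a:=\gamma|x_{1}-x_{2}|$ and $b:=|x_{1}|^{2}-|x_{2}|^{2}$, the functional takes the values $|a+b|$ at $e_{+}$ and $|{-a}+b|=|a-b|$ at $e_{-}$, and $|a+b|+|a-b|\geq\big|(a+b)+(a-b)\big|=2|a|$, so at least one of the two values is $\geq|a|=|\gamma|\,|x_{1}-x_{2}|$. As $\operatorname{dist}_{\gamma}(x_{1},x_{2})$ is an upper bound for both, the left-hand inequality follows.

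There is no genuine obstacle in this proof; the only point requiring a little care is the choice of the antipodal pair $e_{\pm}$ in the lower bound, which is exactly what forces the quadratic term $|x_{1}|^{2}-|x_{2}|^{2}$ to drop out after averaging the two estimates and thereby isolates the linear term $|\gamma|\,|x_{1}-x_{2}|$.
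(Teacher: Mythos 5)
Your proof is correct, and it follows the same high-level strategy as the paper's (arguing directly from the definition, with the upper bound via the triangle inequality and Cauchy--Schwarz and the lower bound via a well-chosen test direction), but the details of both halves differ in a way worth noting. The paper begins with the normalization ``without loss of generality $|x_{1}|\geq|x_{2}|$'' and leans on it twice: for the upper bound it drops the absolute value around $|x_{1}|^{2}-|x_{2}|^{2}=(x_{1}+x_{2})\cdot(x_{1}-x_{2})$ (which is nonnegative only under the WLOG) and then applies Cauchy--Schwarz, and for the lower bound it tests with the single direction $e=(x_{1}-x_{2})/|x_{1}-x_{2}|$, again using the sign information from the WLOG so that the two terms inside $|\cdot|$ reinforce rather than cancel. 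You avoid the normalization entirely: in the upper bound you factor $|x_{1}|^{2}-|x_{2}|^{2}=(|x_{1}|+|x_{2}|)(|x_{1}|-|x_{2}|)$ and invoke $\big||x_{1}|-|x_{2}|\big|\leq|x_{1}-x_{2}|$, and in the lower bound you test with the antipodal pair $e_{\pm}$ and use $|a+b|+|a-b|\geq 2|a|$ to make the quadratic term drop out by averaging. Both proofs are short and elementary; yours is slightly more symmetric and mechanical (no case split, no sign bookkeeping), whereas the paper's is a bit more terse but implicitly relies on the reader tracking the WLOG through both inequalities.
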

\begin{proof}
Without loss of generality, assume $|x_{1}|\geq|x_{2}|$. 
It follows directly that
\[
\operatorname{dist}_{\gamma}(x_{1},x_{2})\leq|\gamma||x_{1}-x_{2}|+(x_{1}+x_{2})\cdot(x_{1}-x_{2})\leq(|\gamma|+|x_{1}|+|x_{2}|)|x_{1}-x_{2}|.
\]
Furthermore, we have  
\begin{equation*}
\operatorname{dist}_{\gamma}(x_{1},x_{2})\geq|\gamma|(x_{1}-x_{2})\cdot \tfrac{x_{1}-x_{2}}{|x_{1}-x_{2}|}+|x_{1}|^{2}-|x_{2}|^{2}\geq|\gamma||x_{1}-x_{2}|.
\end{equation*}
The proof of Proposition \ref{pro2.2} is complete.
\end{proof}
\par
\vspace{2mm}
In fact, the distance $\operatorname{dist}_{\gamma}(\cdot,\cdot)$ is comparable to the Euclidean distance in any compact set.
\begin{corollary}\label{cor2.3}
There exist two positive constants $\kappa_{1}$ and $\kappa_{2}$ depending on $R\in(0,\infty)$ such that for any $x$, $y\in B(0,R)$,
\[
\kappa_{1}\operatorname{dist}_{\gamma}(x,y)\leq|x-y|\leq\kappa_{2}\operatorname{dist}_{\gamma}(x,y).
\]
\end{corollary}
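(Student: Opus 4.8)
The plan is to derive Corollary \ref{cor2.3} directly from Proposition \ref{pro2.2}, which already contains both a lower and an upper bound on $\operatorname{dist}_{\gamma}(x,y)$ in terms of $|x-y|$. The only gap is that the upper bound in Proposition \ref{pro2.2} has the factor $|\gamma|+|x|+|y|$, which is not a fixed constant; so the work is to restrict to $B(0,R)$ and replace this factor by something depending only on $\gamma$ and $R$.

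First I would read Proposition \ref{pro2.2} as the chain
\[
|\gamma|\,|x-y|\leq\operatorname{dist}_{\gamma}(x,y)\leq(|\gamma|+|x|+|y|)\,|x-y|.
\]
From the left inequality, $|x-y|\leq|\gamma|^{-1}\operatorname{dist}_{\gamma}(x,y)$, so one may take $\kappa_{2}=|\gamma|^{-1}$, which indeed depends only on $\gamma$ (and hence trivially on $R$). For the other direction, for $x,y\in B(0,R)$ we have $|x|+|y|<2R$, so $|\gamma|+|x|+|y|<|\gamma|+2R$, and the right inequality of Proposition \ref{pro2.2} gives
\[
\operatorname{dist}_{\gamma}(x,y)\leq(|\gamma|+2R)\,|x-y|,
\]
i.e. $|x-y|\geq(|\gamma|+2R)^{-1}\operatorname{dist}_{\gamma}(x,y)$, so one may take $\kappa_{1}=(|\gamma|+2R)^{-1}$. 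Both constants are positive and depend only on $\gamma$ and $R$, as required.

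There is essentially no obstacle here; the statement is a routine localization of the already-proven two-sided estimate. The only point worth a word of care is matching the labelling convention in the statement: the corollary writes $\kappa_{1}\operatorname{dist}_{\gamma}(x,y)\leq|x-y|\leq\kappa_{2}\operatorname{dist}_{\gamma}(x,y)$, so I must make sure $\kappa_{1}$ is the reciprocal of the \emph{upper} bound factor $(|\gamma|+2R)$ and $\kappa_{2}$ the reciprocal of the \emph{lower} bound factor $|\gamma|$. With the explicit choices $\kappa_{1}=(|\gamma|+2R)^{-1}$ and $\kappa_{2}=|\gamma|^{-1}$, both inequalities of the corollary follow at once from Proposition \ref{pro2.2}, and the proof is complete.
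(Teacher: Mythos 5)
Your proof is correct and is exactly the intended deduction: the paper states Corollary \ref{cor2.3} without proof precisely because it follows immediately from Proposition \ref{pro2.2} by bounding $|x|+|y|<2R$ and inverting the two inequalities, as you do. The explicit constants $\kappa_{1}=(|\gamma|+2R)^{-1}$ and $\kappa_{2}=|\gamma|^{-1}$ are correctly matched to the stated inequality.
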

\begin{corollary}\label{cor2.4}
There exist two positive constants $\kappa_{1}$ and $\kappa_{2}$ depending on $R\in(0,\infty)$ such that for any $\mathcal{B}_{\gamma}(x_{0},\mu)\subset B(0,R)$,
\[
B(x_{0},\kappa_{1}\mu)\subset\mathcal{B}_{\gamma}(x_{0},\mu)\subset B(x_{0},\kappa_{2}\mu).
\]
\end{corollary}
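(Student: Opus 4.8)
The plan is to use the hypothesis $\mathcal{B}_{\gamma}(x_{0},\mu)\subset B(0,R)$ to bound both $|x_{0}|$ and the radius $\mu$ by quantities depending only on $R$ (and the fixed constant $\gamma$), after which the two inclusions follow from the two–sided estimate of Proposition \ref{pro2.2} on a fixed bounded ball. We may assume $\mu>0$, since otherwise all three sets are empty. First, $x_{0}\in\mathcal{B}_{\gamma}(x_{0},\mu)\subset B(0,R)$, so $|x_{0}|<R$. To bound $\mu$, fix any unit vector $e$ and set $y:=x_{0}+2Re$; then $|y|\ge 2R-|x_{0}|>R$, so $y\notin B(0,R)\supset\mathcal{B}_{\gamma}(x_{0},\mu)$ and hence $\operatorname{dist}_{\gamma}(x_{0},y)\ge\mu$. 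On the other hand $|y|<3R$ and $|x_{0}-y|=2R$, so Proposition \ref{pro2.2} gives $\operatorname{dist}_{\gamma}(x_{0},y)\le(|\gamma|+|x_{0}|+|y|)|x_{0}-y|<2R(|\gamma|+4R)=:M$. Hence $\mu<M$, where $M$ depends only on $R$ and $\gamma$.

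For the upper inclusion, take $x\in\mathcal{B}_{\gamma}(x_{0},\mu)$; the lower bound in Proposition \ref{pro2.2} gives $|\gamma|\,|x-x_{0}|\le\operatorname{dist}_{\gamma}(x_{0},x)<\mu$, so $x\in B(x_{0},\mu/|\gamma|)$ and we may take $\kappa_{2}:=1/|\gamma|$ (alternatively, since both $x$ and $x_{0}$ lie in $B(0,R)$, Corollary \ref{cor2.3} furnishes an admissible $\kappa_{2}=\kappa_{2}(R)$). For the lower inclusion, let $\kappa_{1}\in(0,1]$ be chosen momentarily and take $x\in B(x_{0},\kappa_{1}\mu)$. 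Then $|x|\le|x_{0}|+\kappa_{1}\mu<R+M$, so the upper bound in Proposition \ref{pro2.2} yields
\[
\operatorname{dist}_{\gamma}(x_{0},x)\le(|\gamma|+|x_{0}|+|x|)\,|x-x_{0}|<(|\gamma|+2R+M)\,\kappa_{1}\mu .
\]
Choosing $\kappa_{1}:=\min\!\big\{1,\,[2(|\gamma|+2R+M)]^{-1}\big\}$, which depends only on $R$ and $\gamma$, forces $\operatorname{dist}_{\gamma}(x_{0},x)<\mu$, i.e. $x\in\mathcal{B}_{\gamma}(x_{0},\mu)$. This establishes both inclusions with the stated dependence of the constants.

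The only step that needs genuine care is the a priori bound on $\mu$. Because $\operatorname{dist}_{\gamma}$ is not globally comparable to the Euclidean distance — it grows quadratically at infinity — a set that is a ball of small radius in the $\operatorname{dist}_{\gamma}$ metric need not be a small Euclidean ball unless it is confined to a fixed bounded region; it is precisely the containment $\mathcal{B}_{\gamma}(x_{0},\mu)\subset B(0,R)$ that provides this confinement and caps $\mu$ in terms of $R$. Once that bound is available, both inclusions are routine consequences of Proposition \ref{pro2.2} (or of Corollary \ref{cor2.3}) on the resulting fixed compact ball.
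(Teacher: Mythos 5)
The paper leaves Corollary \ref{cor2.4} without a written proof, treating it as an immediate consequence of Proposition \ref{pro2.2} and Corollary \ref{cor2.3}, so there is no explicit argument to compare against. Your proof is correct, and you have rightly isolated the one point that is not completely automatic: for the lower inclusion $B(x_{0},\kappa_{1}\mu)\subset\mathcal{B}_{\gamma}(x_{0},\mu)$ one needs the Euclidean ball $B(x_{0},\kappa_{1}\mu)$ to stay inside a fixed compact set before the upper bound of Proposition \ref{pro2.2} (or Corollary \ref{cor2.3}) can be applied with a uniform constant, and a priori $\kappa_{1}\mu$ could be large. Your device of testing against $y=x_{0}+2Re$, which lies outside $B(0,R)$ and hence outside $\mathcal{B}_{\gamma}(x_{0},\mu)$, to deduce $\mu\le\operatorname{dist}_{\gamma}(x_{0},y)<2R(|\gamma|+4R)$ is exactly the right way to supply the missing confinement. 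The upper inclusion and the final choice of $\kappa_{1}$ are then routine, and the dependence of $\kappa_{1},\kappa_{2}$ on $R$ (and the fixed $\gamma$) is as claimed. This is, in substance, the argument the authors must have had in mind; the only difference is that you have made the a priori bound on $\mu$ explicit, which is a genuine improvement in clarity over silently invoking Corollary \ref{cor2.3}.
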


\section{A particular type of vector-valued function}

In this section we will use the covering method to get H\"older estimates of a vector-valued function, provided that certain combinations of its components are generalized subsolutions of linear elliptic equations in divergence form.

\begin{assumption}\label{lem12}
Let $\psi(x)$ be an $n$-dimensional vector-valued function in $\Omega$, and $\psi^{k}(x)$ denote its $k$-th component for all $1\leq k\leq n$.
Let $\Omega_{*}$ be a fixed subset of $\Omega$,
\[
\psi\in C^{0}(\overline{\Omega_{*}})\cap W^{1,2}(\Omega_{*})\ \ \text{and}\ \ M:=\max\Big\{\sup_{\Omega_{*}}|\psi|,1\Big\}.
\]
Assume that there exists a constant $\gamma_{*}\geq4nM$
such that for any $k\in\{1,\dots,n\}$, the functions 
\[
v_{\pm}^{k}(x):=\pm\gamma_{*}\psi^{k}(x)+|\psi(x)|^{2}\ \ \text{in}\ \overline{\Omega_{*}}
\]
are subsolutions of linear elliptic equations in divergence form
\[
D_{i}\big(a^{ij}(x)D_{j}u+b^{i}(x)u\big)+c^{i}(x)D_{i}u+d(x)u=g+D_{i}f^{i}\ \ \text{in}\ \Omega_{*},
\]
in the generalized sense.
Here, the coefficients satisfy the following conditions in $\Omega_{*}$: 
\[
a^{ij},\ b^{i},\ c^{i},\ d,\ f^{i},\ g\ \text{are measurable};\ \ \frac{\Lambda}{\lambda}\leq\sigma_{*};\ \ \frac{|b^{i}|+|c^{i}|+|d|+|f^{i}|+|g|}{\lambda}\leq\nu,
\]
where $\sigma_{*}$ and $\nu$ are constants independent of $k$, and $\lambda$, $\Lambda$ denote the minimal and maximal eigenvalues of the positive definite matrix $[a^{ij}]$, respectively.
\end{assumption}

\begin{theorem}\label{lem2.1}
Let $B(y,R)$ be any ball contained in $\Omega_{*}$;   
let   
\[
\mu<\operatorname{diam}_{\gamma_{*}}(\psi(B(y,R)))\leq2\mu\ \ \text{and}\ \ 2\mu\geq R; 
\]
let $\psi(B(y,R))$ be covered by $N\geq2$ balls $\mathcal{B}_{\gamma_{*}}^{1},\dots,\mathcal{B}_{\gamma_{*}}^{N}$ of radius $\varepsilon\mu$ with respect to the distance $\operatorname{dist}_{\gamma_{*}}(\cdot,\cdot)$; 
and let Assumption \textsl{\ref{lem12}} hold.
Then there exist two small positive constants $\varepsilon_{0}$ and $\delta$, both independent of $y$, $R$ and $\mu$, such that, if in addition $\varepsilon\leq\varepsilon_{0}$ then $\psi(B(y,\delta R))$ can be covered by $N-1$ balls among $\mathcal{B}_{\gamma_{*}}^{1},\dots,\mathcal{B}_{\gamma_{*}}^{N}$.
Furthermore, both constants $\varepsilon_{0}$ and $\delta$ depend only on $n$, $\psi(\overline{\Omega_{*}})$, $\gamma_{*}$, $\sigma_{*}$, $\nu$ and $\operatorname{diam}\Omega_{*}$.
\end{theorem}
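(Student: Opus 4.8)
The plan is to adapt the covering/oscillation-decay scheme of Caffarelli--Cabr\'e to the image $\psi(B(y,R))$ and the non-Euclidean distance $\operatorname{dist}_{\gamma_*}$. The key dichotomy: among the $N\geq2$ balls $\mathcal{B}_{\gamma_*}^1,\dots,\mathcal{B}_{\gamma_*}^N$ covering $\psi(B(y,R))$, fix one of them, say $\mathcal{B}_{\gamma_*}^1=\mathcal{B}_{\gamma_*}(q,\varepsilon\mu)$. Because $\operatorname{diam}_{\gamma_*}(\psi(B(y,R)))>\mu$ and $\varepsilon$ is small, the set $\psi(B(y,R))$ is \emph{not} contained in $\mathcal{B}_{\gamma_*}^1$, so there is a point $x_1\in B(y,R)$ with $\operatorname{dist}_{\gamma_*}(\psi(x_1),q)\ge c\mu$ for a universal $c$ (depending on $N$, but $N$ is controlled by $n$ and $\varepsilon$ via Corollary \ref{cor2.4}, so $c$ is universal). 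First I would show that if the measure of $\{x\in B(y,R):\psi(x)\in\mathcal{B}_{\gamma_*}^1\}$ is small enough relative to $|B(y,R)|$, then on the smaller ball $B(y,\delta R)$ the image avoids $\mathcal{B}_{\gamma_*}^1$ entirely, giving the covering by $N-1$ balls. The quantitative tool for ``small measure $\Rightarrow$ pointwise avoidance on a smaller ball'' is the weak Harnack inequality for supersolutions — or rather, its application to the function
\[
w(x):=\operatorname{dist}_{\gamma_*}\big(\psi(x),q\big)\wedge(\text{const}),
\]
which I claim is (up to normalization) a supersolution of a linear divergence-form equation on $\Omega_*$. This is where Assumption \ref{lem12} enters decisively.

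Second, I would establish that claim. By definition of $\operatorname{dist}_{\gamma_*}$, for each unit vector $e$,
\[
\gamma_*(\psi(x)-q)\cdot e + |\psi(x)|^2 - |q|^2 = e\cdot\big(\text{linear combo of the }v_+^k(x)\big) + \big(|\psi(x)|^2\big)(1-\textstyle\sum e_k\cdot 0)\dots
\]
more precisely, writing $e=(e_1,\dots,e_n)$ and using $v_\pm^k=\pm\gamma_*\psi^k+|\psi|^2$, one has
\[
\gamma_*\,\psi\cdot e + |\psi|^2 = \sum_{k:\,e_k\ge 0} e_k\,v_+^k(x) + \sum_{k:\,e_k<0} |e_k|\,v_-^k(x) + \Big(1-\sum_k|e_k|\Big)|\psi(x)|^2 + \gamma_*\,\text{(const)},
\]
so up to the additive constant $-|q|^2+\gamma_* q\cdot e$ and the extra term $(1-\sum|e_k|)|\psi|^2$, the quantity $\gamma_*(\psi-q)\cdot e+|\psi|^2-|q|^2$ is a nonnegative-coefficient convex combination of the subsolutions $v_\pm^k$ plus a controlled remainder. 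Since $|\psi|^2=\frac1{2\gamma_*}(v_+^k+v_-^k)$ for any $k$, the leftover $|\psi|^2$ term is itself expressible through the $v_\pm^k$, and since $\gamma_*\ge 4nM$ controls cross terms, the whole expression $\operatorname{dist}_{\gamma_*}(\psi(x),q)$, being a supremum over $e$ of such combinations, is a subsolution; hence $w:=C\mu - \operatorname{dist}_{\gamma_*}(\psi(\cdot),q)$ is a nonnegative supersolution on $B(y,R)$ after truncation, with the zeroth/first-order coefficients and right-hand side bounded by $\nu$ and the ellipticity ratio by $\sigma_*$. The technical care here is that a supremum of subsolutions is a subsolution only after passing through the usual $\max$-of-subsolutions lemma in $W^{1,2}$, and that the covering ball $\mathcal{B}_{\gamma_*}^1$ must be positioned so that $\operatorname{dist}_{\gamma_*}(\psi(x),q)$ stays bounded (use Proposition \ref{pro2.2} and $M$).

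Third, I would run the weak Harnack inequality (in the form from, e.g., Gilbarg--Trudinger, Theorem 8.18, valid for divergence-form supersolutions with the stated coefficient bounds, scaled to the ball $B(y,R)$ using $2\mu\ge R$ so that the scaling factors are universal): if $w\ge 0$ is a supersolution on $B(y,R)$ then
\[
\Big(\fint_{B(y,2\delta R)} w^{p}\,dx\Big)^{1/p} \le C\Big(\inf_{B(y,\delta R)} w + \nu R\,\|g,f\|\Big)
\]
for some $p>0$ and universal $C$. Two cases: either $\psi(B(y,R))$ already meets $\mathcal{B}_{\gamma_*}^1$ on a set of small measure, in which case $w$ is close to its max $C\mu$ on most of $B(y,2\delta R)$, the left side is $\gtrsim\mu$, forcing $\inf_{B(y,\delta R)}w\gtrsim\mu$ once $R\lesssim\mu$ is used to absorb the error term $\nu R$, i.e. $\psi(B(y,\delta R))$ avoids $\mathcal{B}_{\gamma_*}^1$; or else $\{x:\psi(x)\in\mathcal{B}_{\gamma_*}^1\}$ has measure $\ge\eta|B(y,R)|$ — but then, since the $N$ balls each have $\operatorname{dist}_{\gamma_*}$-radius $\varepsilon\mu$ and the total diameter exceeds $\mu$, a pigeonhole over the (finitely many, $N\le C(n)$) balls shows some \emph{other} ball $\mathcal{B}_{\gamma_*}^{j_0}$ is hit on a set of measure $\le(1-\eta/N)|B(y,R)|<$ small, and we apply the previous argument to that $j_0$ instead. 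Choosing $\varepsilon_0$ small enough that $N\varepsilon_0$ is less than a universal fraction of $1$ (so the balls genuinely cannot all be ``large'' simultaneously), and then $\delta$ small depending on the weak-Harnack constant and $\eta$, closes the argument; tracking the constants through Corollaries \ref{cor2.3}--\ref{cor2.4} shows the dependence is only on $n$, $\psi(\overline{\Omega_*})$, $\gamma_*$, $\sigma_*$, $\nu$, $\operatorname{diam}\Omega_*$.

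The main obstacle I anticipate is the second step: verifying rigorously that $\operatorname{dist}_{\gamma_*}(\psi(\cdot),q)$ (a $\sup$ over $e\in\mathbb{S}^{n-1}$ of affine-in-$v_\pm^k$ expressions plus the quadratic remainder $(1-\sum|e_k|)|\psi|^2$) is a genuine subsolution in the $W^{1,2}$ generalized sense with coefficients still obeying the Assumption \ref{lem12} bounds — the remainder term and the additive constants must be folded into the lower-order coefficients $b^i,c^i,d,g,f^i$ without blowing up the ratio $\nu/\lambda$, and this is exactly where the hypothesis $\gamma_*\ge 4nM$ is used to keep everything proportionate. Once that reduction is clean, the covering/pigeonhole and the weak Harnack application are routine.
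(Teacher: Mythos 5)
Your proposal has several genuine gaps, and the overall strategy diverges from the paper in a way that matters.

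\textbf{The central claim is false.} You claim that $\operatorname{dist}_{\gamma_*}(\psi(\cdot),q)$ is a subsolution by writing it as a supremum over $e$ of convex combinations of the $v_\pm^k$. But your own decomposition
\[
\gamma_*\,\psi\cdot e+|\psi|^2=\sum_{k:\,e_k\ge0}e_k\,v_+^k+\sum_{k:\,e_k<0}|e_k|\,v_-^k+\Big(1-\sum_k|e_k|\Big)|\psi|^2
\]
has a remainder with the \emph{wrong sign}: for $e\in\mathbb{S}^{n-1}$ one has $\sum_k|e_k|\ge1$, so $1-\sum_k|e_k|\le0$. Folding this back through $|\psi|^2=\frac12(v_+^k+v_-^k)$ (note the $\gamma_*$ in your denominator is a typo) produces a signed combination of the $v_\pm^k$ with negative coefficients, which destroys the subsolution property. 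Moreover, you drop the absolute value from the definition of $\operatorname{dist}_{\gamma_*}$; in fact $\operatorname{dist}_{\gamma_*}(\psi,q)=\gamma_*|\psi-q|+\big||\psi|^2-|q|^2\big|$, and the second summand is not a convex function of $\psi$ (it is concave on the set $|\psi|<|q|$), so this function has no chance of being a subsolution no matter how the decomposition is arranged.

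\textbf{The weak Harnack direction is backwards.} Even granting the subsolution claim, what you would get is that $C\mu-\operatorname{dist}_{\gamma_*}(\psi(\cdot),q)$ is a nonnegative supersolution, and the weak Harnack inequality then bounds its infimum \emph{from below} on $B(y,\delta R)$, i.e. it bounds $\operatorname{dist}_{\gamma_*}(\psi(\cdot),q)$ \emph{from above}. But to eliminate the ball $\mathcal{B}_{\gamma_*}^1$ you need the opposite: a lower bound on $\operatorname{dist}_{\gamma_*}(\psi(\cdot),q)$ on $B(y,\delta R)$. The "small-measure $\Rightarrow$ avoidance" mechanism you have in mind does exist, but it is intrinsically one-sided, and it pushes in the wrong direction for an arbitrarily prescribed target ball. (Your two definitions of $w$ also disagree in sign, and the pigeonhole in the "else" branch yields measure $\le(1-\eta/N)|B(y,R)|$, which is not small.)

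\textbf{How the paper avoids this.} The paper never tries to make $\operatorname{dist}_{\gamma_*}(\psi(\cdot),q)$ a sub- or supersolution, and it does not fix in advance which covering ball to delete. Instead it works directly with the finitely many subsolutions $v_\pm^k$ and uses two elementary comparison inequalities: $|\tilde v^k(x)-\tilde v^k(y)|\le\operatorname{dist}_{\gamma_*}(\psi(x),\psi(y))$ for every $k$, and conversely a large $\operatorname{dist}_{\gamma_*}$ between two image points forces a single $\tilde v^k=v_\pm^k$ to increase by a definite amount (this is where $\gamma_*\ge4nM$ enters, exactly as you anticipated, but to ensure a \emph{lower} bound on one component's oscillation, not to fold remainder terms into coefficients). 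The weak Harnack inequality is applied to the nonnegative supersolution $\widetilde V^k-\tilde v^k$, where $\widetilde V^k=\sup_{B(y,R)}\tilde v^k$; it pushes $\tilde v^k$ away from its supremum on the shrunken ball. The ball that gets discarded is therefore \emph{the one sitting near where $\tilde v^k$ nearly attains its supremum}, which is dictated by the direction of weak Harnack rather than chosen freely. This adaptive selection is the idea your proposal is missing, and it is what makes the argument close: the "far" ball $M_1$ (selected by measure pigeonhole) feeds the weak Harnack input, and the "near-max" ball $M_s$ is what gets ejected by the resulting contradiction.
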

\begin{remark}
Since the distance $\operatorname{dist}_{\gamma_{*}}(\cdot,\cdot)$ does not have positive homogeneity, we cannot use normalization.
\end{remark}
\begin{proof}
Let $x_{s}\in B(y,R)$ be such that $\mathcal{B}^{s}_{\gamma_{*}}\subset \mathcal{B}_{\gamma_{*}}(M_{s},2\varepsilon\mu)$ for all $s\in\{1,\dots,N\}$, where
\[
M_{s}:=\psi(x_{s}).
\] 
For convenience, denote
\[
c_{0}:=\frac{\mu}{32\sqrt{n}}.
\]
If we take $\varepsilon_{0}\leq1/(128\sqrt{n})$, then the balls  $\{\mathcal{B}_{\gamma_{*}}(M_{s},c_{0}/2)\}_{s=1}^{N}$ cover $\psi(B(y,R))$.

Let $\mathcal{B}_{\gamma_{*}}$ be any ball of radius $2\mu$ with respect to the  distance $\operatorname{dist}_{\gamma_{*}}(\cdot,\cdot)$ that intersects  $\psi(\overline{\Omega_{*}})$.
Let $N'\in(0,+\infty]$ denote the supremum of the number of points in $\mathcal{B}_{\gamma_{*}}$ such that the distance $\operatorname{dist}_{\gamma_{*}}(\cdot,\cdot)$ between any two of them is at least $c_{0}/2$.
We claim that
\begin{equation}\label{claim3.1}
\text{$N'$ is finite and depends only on $n$, $\psi(\overline{\Omega_{*}})$ and $\gamma_{*}$.}
\end{equation}
Clearly, $\mathcal{B}_{\gamma_{*}}$ is contained in a compact set of $\mathbb{R}^{n}$ which depends only on $n$ and $\psi(\overline{\Omega_{*}})$.
Observe that the number of points in the unit ball $B_{1}\subset\mathbb{R}^{n}$, with the property that the Euclidean distance between any two of them is at least $h$, is finite and its maximum depends only on $n$ and $h$.
Then claim \eqref{claim3.1} follows from Corollaries \ref{cor2.3} and \ref{cor2.4}.
Consequently, there exists a constant $\widetilde{N}\leq \min\{N',N\}$ such that the balls $\{\mathcal{B}_{\gamma_{*}}(M_{s_{i}},c_{0})\}_{i=1}^{\widetilde{N}}$ cover $\psi(B(y,R))$.

For any $j\in\{1,\dots,n\}$, define
\[
V_{\pm}^{j}:=\sup_{B(y,R)}v_{\pm}^{j}\ \ \text{and}\ \ w_{\pm}^{j}:=V_{\pm}^{j}-v_{\pm}^{j}\ \ \text{in}\ B(y,R).
\]
By Assumption \ref{lem12} and the weak Harnack inequality (see Theorem 8.18 of \cite{GT}), we obtain that for any $\tau<1/2$ and $j\in\{1,\dots,n\}$,
\begin{equation}\label{eqn2.10}
(\tau R)^{-n}\int_{B(y,2\tau R)}w_{\pm}^{j}(x)\ dx\leq K\left(\inf_{B(y,\tau R)}w_{\pm}^{j}+\tau R\right),
\end{equation}
where $K$ depends on $n$, $\sigma_{*}$, $\nu$, $\operatorname{diam}\Omega_{*}$ and  $\|\psi\|_{C^{0}(\overline{\Omega_{*}})}$.
By the condition $2\mu\geq R$, there exists a constant $\delta\in(0,1/2)$ depending on 
$n$, $\psi(\overline{\Omega_{*}})$, $\gamma_{*}$, $\sigma_{*}$, $\nu$ and $\operatorname{diam}\Omega_{*}$, such that 
\begin{equation}\label{eqn2.11}
\frac{2^{n}|B_{1}|}{N'}c_{0}-K\delta R\geq\frac{2^{n-1}|B_{1}|}{N'}c_{0}. 
\end{equation}

There exists some $M_{s_{i}}$, say $M_{1}$, such that
\begin{equation}\label{eqn2.12}
\left|\psi^{-1}(\mathcal{B}_{\gamma_{*}}(M_{1},c_{0}))\cap B(y,2\delta R)\right|\geq\frac{|B(y,2\delta R)|}{\widetilde{N}}\geq\frac{|B(y,2\delta R)|}{N'}.
\end{equation}
Since the balls $\{\mathcal{B}_{\gamma_{*}}(M_{s},2\varepsilon\mu)\}_{s=1}^{N}$ cover $\psi(B(y,R))$ and  $\varepsilon_{0}\leq1/8$, there must exist another $M_{s}$, say $M_{2}$, such that  
\begin{equation}\label{eqn2.1}
\operatorname{dist}_{\gamma_{*}}(\psi(x_{1}),\psi(x_{2}))=\operatorname{dist}_{\gamma_{*}}(M_{1},M_{2})\geq\frac{\mu}{4}.
\end{equation}
Let $k\in\{1,\dots,n\}$ be the index satisfying 
\[
\left|\psi^{k}(x_{2})-\psi^{k}(x_{1})\right|=\max_{1\leq l\leq n}\left|\psi^{l}(x_{2})-\psi^{l}(x_{1})\right|.
\]
By \eqref{eqn2.1}, \eqref{eqn2.111} and the condition $\gamma_{*}\geq4nM$, we deduce 
\begin{equation}\label{eqn9.5}
\begin{split}
\frac{\mu}{4}&\leq\gamma_{*}|\psi(x_{2})-\psi(x_{1})|+\sum_{l=1}^{n}\left|\Big(\psi^{l}(x_{2})+\psi^{l}(x_{1})\Big)\Big(\psi^{l}(x_{2})-\psi^{l}(x_{1})\Big)\right|\\
&\leq\gamma_{*}\sqrt{n}\left|\psi^{k}(x_{2})-\psi^{k}(x_{1})\right|+ 2nM\Big|\psi^{k}(x_{2})-\psi^{k}(x_{1})\Big|\\
&\leq(2\sqrt{n}+1)(\gamma_{*}-2nM)\left|\psi^{k}(x_{2})-\psi^{k}(x_{1})\right|\\
&\leq4\sqrt{n}\left(\tilde{v}^{k}(x_{2})-\tilde{v}^{k}(x_{1})\right),
\end{split}
\end{equation}
where 
\[
\tilde{v}^{k}:=\gamma_{*} \psi^{k}\times\operatorname{sign}\Big(\psi^{k}(x_{2})-\psi^{k}(x_{1})\Big)+|\psi|^{2}\ \ \text{in}\ B(y,R). 
\]
It follows from \eqref{eqn2.111} that 
\begin{equation}\label{eqn9.1}
\left|\tilde{v}^{k}(x)-\tilde{v}^{k}(y)\right|\leq\operatorname{dist}_{\gamma_{*}}(\psi(x),\psi(y)),\ \ \forall x,\ y.
\end{equation}
Let $\widetilde{V}^{k}$ denote the supremum of $\tilde{v}^{k}$ over $B(y,R)$. 
From \eqref{eqn9.5}, we obtain  
\[
\widetilde{V}^{k}\geq\tilde{v}^{k}(x_{2})\geq\tilde{v}^{k}(x_{1})+\frac{\mu}{16\sqrt{n}}= \tilde{v}^{k}(x_{1})+2c_{0},
\]
which, combined with \eqref{eqn9.1}, implies  
\begin{equation}\label{eqn2.4}
\widetilde{V}^{k}-\tilde{v}^{k}(x)\geq c_{0},\ \ \forall x\in\psi^{-1}(\mathcal{B}_{\gamma_{*}}(M_{1},c_{0}))\cap B(y,2\delta R). 
\end{equation}
It is clear that either $\tilde{v}^{k}=v^{k}_{+}$ or $\tilde{v}^{k}=v^{k}_{-}$. 
By virtue of \eqref{eqn2.10}, \eqref{eqn2.12} and \eqref{eqn2.4}, one gets 
\[
\inf_{B(y,\delta R)}\left\{\widetilde{V}^{k}-\tilde{v}^{k}\right\}\geq \frac{1}{K}\left[(\tau R)^{-n}\frac{|B(y,2\tau R)|}{N'}c_{0}-K\tau R\right].
\]
Combining this with \eqref{eqn2.11} yields 
\[
\inf_{B(y,\delta R)}\left\{\widetilde{V}^{k}-\tilde{v}^{k}\right\}\geq \frac{2^{n-6}|B_{1}|}{\sqrt{n}KN'}\mu.
\]
Let 
\[
\varepsilon_{0}:=\min\left\{\frac{1}{5}\frac{2^{n-6}|B_{1}|}{\sqrt{n}KN'},\frac{1}{128\sqrt{n}}\right\},
\]
which depends only on $n$, $\psi(\overline{\Omega_{*}})$, $\gamma_{*}$, $\sigma_{*}$, $\nu$ and $\operatorname{diam}\Omega_{*}$. 
Thus   
\begin{equation}\label{eqn2.6}
\inf_{B(y,\delta R)}\left\{\widetilde{V}^{k}-\tilde{v}^{k}\right\}\geq 5\varepsilon\mu.
\end{equation}

As $\widetilde{V}^{k}$ is the supremum of $\tilde{v}^{k}$ over $B(y,R)$, there exists a point $\bar{x}\in B(y,R)$ such that 
\[
\big|\widetilde{V}^{k}-\tilde{v}^{k}(\bar{x})\big|<\varepsilon\mu. 
\]
Since the balls $\{\mathcal{B}_{\gamma_{*}}(M_{s},2\varepsilon\mu)\}_{s=1}^{N}$ cover $\psi(B(y,R))$, there must exist an index $s\in\{1,\dots,N\}$ with 
\[
\psi(\bar{x})\in \mathcal{B}_{\gamma_{*}}(M_{s},2\varepsilon\mu). 
\]
Thus together with \eqref{eqn9.1}, we get 
\begin{equation}\label{eqn2.7}
\widetilde{V}^{k}-\tilde{v}^{k}(x_{s})<\varepsilon\mu+\left|\tilde{v}^{k}(\bar{x})-\tilde{v}^{k}(x_{s})\right|\leq\varepsilon\mu+\operatorname{dist}_{\gamma_{*}}(\psi(\bar{x}),M_{s})<3\varepsilon\mu.
\end{equation}

Assume by contradiction that $\psi(B(y,\delta R))\cap \mathcal{B}_{\gamma_{*}}^{s}$ is not empty.
This implies there exists a point $\tilde{x}\in B(y,\delta R)$ satisfying 
\[
\psi(\tilde{x})\in\mathcal{B}_{\gamma_{*}}(M_{s},2\varepsilon\mu).
\]
Then one gets from \eqref{eqn9.1} that 
\begin{equation}\label{eqn9.2}
\left|\tilde{v}^{k}(\tilde{x})-\tilde{v}^{k}(x_{s})\right|<2\varepsilon\mu. 
\end{equation}
Combining \eqref{eqn2.7} and \eqref{eqn9.2}, it follows that 
\[
\widetilde{V}^{k}-\tilde{v}^{k}(\tilde{x})<5\varepsilon\mu,\ \ \text{where}\ \tilde{x}\in B(y,\delta R),
\]
which contradicts \eqref{eqn2.6}. 
This completes the proof of Theorem \ref{lem2.1}.
\end{proof}

\begin{corollary}\label{lem2.2}
Under Assumption \textsl{\ref{lem12}}, there exists a constant $\delta_{0}\in(0,1/2)$ such that for any $B(y,R)\subset\Omega_{*}$, 
\[
\operatorname{diam}_{\gamma_{*}}(\psi(B(y,\delta_{0}R)))\leq\frac{1}{2}\max\big\{\!\operatorname{diam}_{\gamma_{*}}(\psi(B(y,R))),2R\big\},  
\] 
where $\delta_{0}$ depends only on $n$, $\psi(\overline{\Omega_{*}})$, $\gamma_{*}$, $\sigma_{*}$, $\nu$ and $\operatorname{diam}\Omega_{*}$,
\end{corollary}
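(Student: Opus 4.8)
The plan is to obtain this contraction estimate by \emph{iterating} Theorem~\ref{lem2.1}: fix once and for all a scale $\mu=\tfrac12\operatorname{diam}_{\gamma_*}(\psi(B(y,R)))$ and a finite cover of $\psi(B(y,R))$ by $\operatorname{dist}_{\gamma_*}$-balls of radius $\varepsilon_0\mu$, and apply Theorem~\ref{lem2.1} repeatedly, each application deleting one ball from the cover while replacing the domain ball $B(y,\rho)$ by $B(y,\delta\rho)$. After finitely many steps the cover shrinks to a single ball, and on that small ball the image has $\operatorname{dist}_{\gamma_*}$-diameter at most $2\varepsilon_0\mu<\mu$, which is the desired halving.

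To set this up, fix $B(y,R)\subset\Omega_*$, put $D:=\operatorname{diam}_{\gamma_*}(\psi(B(y,R)))$, and observe that $\rho\mapsto\operatorname{diam}_{\gamma_*}(\psi(B(y,\rho)))$ is nondecreasing because $\psi(B(y,\rho))\subset\psi(B(y,R))$ for $\rho\le R$. If $D\le R$ then $\max\{D,2R\}=2R$ and, for every $\delta_0\in(0,1/2)$, $\operatorname{diam}_{\gamma_*}(\psi(B(y,\delta_0R)))\le D\le R=\tfrac12\max\{D,2R\}$; so we may assume $D>R$ and set $\mu:=D/2$, giving $\mu<D\le 2\mu$ and $2\mu=D>R$. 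Let $\varepsilon_0$ and $\delta$ be the universal constants from Theorem~\ref{lem2.1} and take $\varepsilon:=\varepsilon_0$. Arguing as for claim~\eqref{claim3.1} — now with balls of radius $\varepsilon_0\mu$ instead of $c_0/2$, and using that by Corollaries~\ref{cor2.3}--\ref{cor2.4} the distance $\operatorname{dist}_{\gamma_*}$ is bi-Lipschitz equivalent to the Euclidean distance on the compact set $\psi(\overline{\Omega_*})$ — one obtains a constant $N_0\ge 2$, depending only on $n$, $\psi(\overline{\Omega_*})$, $\gamma_*$ and $\varepsilon_0$, and hence only on the data in the statement, such that every subset of $\psi(\overline{\Omega_*})$ of $\operatorname{dist}_{\gamma_*}$-diameter at most $2\mu$ can be covered by at most $N_0$ balls of radius $\varepsilon_0\mu$. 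In particular $\psi(B(y,R))$ is covered by $N\le N_0$ balls $\mathcal{B}_{\gamma_*}^{1},\dots,\mathcal{B}_{\gamma_*}^{N}$ of radius $\varepsilon_0\mu$, and $N\ge 2$, since if $N=1$ the triangle inequality for $\operatorname{dist}_{\gamma_*}$ would force $D\le 2\varepsilon_0\mu<\mu$ (recall $\varepsilon_0<1/2$), contradicting $D=2\mu$.

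Now the iteration. Set $j_0:=\min\{j\ge 0:\operatorname{diam}_{\gamma_*}(\psi(B(y,\delta^jR)))\le\mu\}$, with the convention $\min\emptyset=+\infty$; I claim $j_0\le N-1$. Assume not, so $\operatorname{diam}_{\gamma_*}(\psi(B(y,\delta^jR)))>\mu$ for $j=0,\dots,N-1$. By induction $\psi(B(y,\delta^jR))$ is then covered by $N-j$ of the balls $\mathcal{B}_{\gamma_*}^{i}$: this is the case $j=0$, and for $0\le j\le N-2$ all hypotheses of Theorem~\ref{lem2.1} hold for the ball $B(y,\delta^jR)$ at the \emph{same} scale $\mu$ — namely $\mu<\operatorname{diam}_{\gamma_*}(\psi(B(y,\delta^jR)))\le D=2\mu$ (lower bound from $j<j_0$, upper one from monotonicity), $2\mu=D>R\ge\delta^jR$, the current number of covering balls $N-j\ge 2$, $\varepsilon=\varepsilon_0$, and Assumption~\ref{lem12} is in force — so Theorem~\ref{lem2.1} covers $\psi(B(y,\delta^{j+1}R))$ by $N-j-1$ of them. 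For $j=N-1$ this gives a cover of $\psi(B(y,\delta^{N-1}R))$ by a single ball of radius $\varepsilon_0\mu$, whence $\operatorname{diam}_{\gamma_*}(\psi(B(y,\delta^{N-1}R)))\le 2\varepsilon_0\mu<\mu$, contradicting the assumption. Hence $j_0\le N-1\le N_0-1$. Taking $\delta_0:=\delta^{\,N_0-1}\in(0,1/2)$ and using the monotonicity of the diameter together with $\delta^{\,N_0-1}R\le\delta^{\,j_0}R$, we conclude
\[
\operatorname{diam}_{\gamma_*}\big(\psi(B(y,\delta_0R))\big)\le\operatorname{diam}_{\gamma_*}\big(\psi(B(y,\delta^{\,j_0}R))\big)\le\mu=\tfrac{D}{2}\le\tfrac12\max\{D,2R\}.
\]
Since $\delta$ and $N_0$ depend only on $n$, $\psi(\overline{\Omega_*})$, $\gamma_*$, $\sigma_*$, $\nu$ and $\operatorname{diam}\Omega_*$, so does $\delta_0$.

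I expect the main difficulty to be the two points where the lack of homogeneity of $\operatorname{dist}_{\gamma_*}$ has to be circumvented: first, producing the $\mu$-independent covering number $N_0$, where rescaling is unavailable and one must instead use the bi-Lipschitz equivalence of $\operatorname{dist}_{\gamma_*}$ with the Euclidean distance on the fixed compact set $\psi(\overline{\Omega_*})$ (Corollaries~\ref{cor2.3}--\ref{cor2.4}); and second, checking that \emph{every} hypothesis of Theorem~\ref{lem2.1} survives the whole chain of contractions although $\mu$ and the covering balls are kept frozen. The latter is exactly why $\mu$ is set equal to $\tfrac12\operatorname{diam}_{\gamma_*}(\psi(B(y,R)))$ and why one first reduces to the case $D>R$: together these make $\operatorname{diam}_{\gamma_*}(\psi(B(y,\delta^jR)))\le 2\mu$ and $2\mu\ge\delta^jR$ hold automatically at every step.
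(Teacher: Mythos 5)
Your proposal is correct and follows essentially the same route as the paper: reduce to the case $D>R$, freeze $\mu=\tfrac12\operatorname{diam}_{\gamma_*}(\psi(B(y,R)))$, use Corollaries~\ref{cor2.3}--\ref{cor2.4} to get a $\mu$-independent covering number, iterate Theorem~\ref{lem2.1} deleting one ball per step, and take $\delta_0=\delta^{N_0-1}$. Your write-up is somewhat more explicit than the paper's in verifying that the hypotheses of Theorem~\ref{lem2.1} (the bounds $\mu<\operatorname{diam}\le 2\mu$, $2\mu\ge\delta^jR$, and $N-j\ge 2$) persist along the iteration and in pinning down why the iteration must stop before the cover is exhausted, but the underlying argument is the same.
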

\begin{proof}
If $\operatorname{diam}_{\gamma_{*}}(\psi(B(y,R)))<R$, the result follows directly.
Hence, we may assume 
\[
\operatorname{diam}_{\gamma_{*}}(\psi(B(y,R)))\geq R.
\]
Let $\varepsilon_{0}$ and $\delta$ be constants given in Theorem \ref{lem2.1}, and define
\[
\mu:=\frac{1}{2}\operatorname{diam}_{\gamma_{*}}(\psi(B(y,R))).
\]
By arguments similar to those in \eqref{claim3.1}, $\psi(B(y,R))$ can be covered by $N''$ balls $\mathcal{B}_{\gamma_{*}}^{1},\dots,\mathcal{B}_{\gamma_{*}}^{N''}$ of radius $\varepsilon_{0}\mu$ with respect to the distance $\operatorname{dist}_{\gamma_{*}}(\cdot,\cdot)$, where $N''$ depends only on $n$,  $\psi(\overline{\Omega_{*}})$, $\gamma_{*}$ and $\varepsilon_{0}$.
It follows from Theorem \ref{lem2.1} that $\psi(B(y,\delta R))$ can be covered by $N''-1$ balls among $\mathcal{B}_{\gamma_{*}}^{1},\dots,\mathcal{B}_{\gamma_{*}}^{N''}$.

If $\operatorname{diam}_{\gamma_{*}}(\psi(B(y,\delta R)))\leq\mu$, we are done.
If $\operatorname{diam}_{\gamma_{*}}(\psi(B(y,\delta R)))>\mu$, then applying Theorem \ref{lem2.1} again yields that $\psi(B(y,\delta^{2}R))$ can be covered by $N''-2$ balls from $\mathcal{B}_{\gamma_{*}}^{1},\dots,\mathcal{B}_{\gamma_{*}}^{N''}$.
Since $\varepsilon_{0}\leq1/(128\sqrt{n})$, we cannot exhaust all the balls.
Therefore, we get the desired constant $\delta_{0}$ $(\geq\delta^{N''-1})$ by successive applications of Theorem \ref{lem2.1}, where $\delta_{0}$ depends only on $n$, $\psi(\overline{\Omega_{*}})$, $\gamma_{*}$, $\sigma_{*}$, $\nu$ and $\operatorname{diam}\Omega_{*}$. 
This completes the proof of Corollary \ref{lem2.2}.
\end{proof}

\begin{theorem}\label{thm1.1}
Under Assumption \textsl{\ref{lem12}}, for any $\Omega_{*}'\subset\subset\Omega_{*}$ we have H\"older seminorm estimates
\[
[\psi]_{\alpha;\Omega_{*}'}\leq Cd^{-\alpha},
\]
where $d=\operatorname{dist}(\Omega_{*}',\partial\Omega_{*})$,  $C=C\big(n,\|\psi\|_{C^{0}(\overline{\Omega_{*}})},\operatorname{diam}\Omega_{*}\big)$, and $\alpha\in(0,1)$ is a constant depending only on $n$, $\psi(\overline{\Omega_{*}})$, $\gamma_{*}$, $\sigma_{*}$, $\nu$ and $\operatorname{diam}\Omega_{*}$.
\end{theorem}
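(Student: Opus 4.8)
The plan is to derive the Hölder seminorm estimate for $\psi$ from the iterated oscillation decay provided by Corollary \ref{lem2.2}, phrasing everything in terms of $\operatorname{dist}_{\gamma_*}$ and then converting back to the Euclidean metric via Proposition \ref{pro2.2} (equivalently Corollaries \ref{cor2.3}, \ref{cor2.4}). First I would fix a point $y\in\Omega_*'$ and a radius $R_0\le d$ with $B(y,R_0)\subset\Omega_*$, and define the quantity $\omega(r):=\max\{\operatorname{diam}_{\gamma_*}(\psi(B(y,r))),2r\}$ for $0<r\le R_0$. Corollary \ref{lem2.2} applied with the ball $B(y,r)$ says precisely that $\operatorname{diam}_{\gamma_*}(\psi(B(y,\delta_0 r)))\le\tfrac12\omega(r)$; since also $2\delta_0 r\le\tfrac12(2r)\le\tfrac12\omega(r)$, we get the clean recursion $\omega(\delta_0 r)\le\tfrac12\omega(r)$. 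Iterating, $\omega(\delta_0^m R_0)\le 2^{-m}\omega(R_0)$ for all $m\ge0$.

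Next I would upgrade this dyadic-type decay to a genuine power estimate in the standard way: given $r\in(0,R_0]$, pick the integer $m$ with $\delta_0^{m+1}R_0< r\le\delta_0^m R_0$, so that $m\ge \log(R_0/r)/\log(1/\delta_0)-1$ and hence $2^{-m}\le 2\,(r/R_0)^{\alpha}$ where $\alpha:=\log 2/\log(1/\delta_0)\in(0,1)$ depends only on $\delta_0$, hence only on the quantities listed in Corollary \ref{lem2.2}. Monotonicity of $\omega$ then gives $\operatorname{diam}_{\gamma_*}(\psi(B(y,r)))\le\omega(r)\le\omega(\delta_0^m R_0)\le 2^{-m}\omega(R_0)\le 2(r/R_0)^\alpha\,\omega(R_0)$. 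Choosing $R_0=d$ (or $d/2$, whichever is cleaner for the nesting $B(y,R_0)\subset\Omega_*$) and bounding $\omega(R_0)=\max\{\operatorname{diam}_{\gamma_*}(\psi(B(y,R_0))),2R_0\}$ by a constant depending on $n$, $\|\psi\|_{C^0(\overline{\Omega_*})}$, $\gamma_*$ and $\operatorname{diam}\Omega_*$ — using Proposition \ref{pro2.2} to estimate $\operatorname{dist}_{\gamma_*}$ in terms of Euclidean quantities — yields $\operatorname{diam}_{\gamma_*}(\psi(B(y,r)))\le C\, d^{-\alpha} r^\alpha$ with $C$ of the advertised dependence.

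Finally I would translate this back to the ordinary Hölder seminorm. Given two points $x_1,x_2\in\Omega_*'$, set $r=|x_1-x_2|$; if $r\le R_0$ then $x_1,x_2\in B(x_1,r)$ with $B(x_1,R_0)\subset\Omega_*$, and the lower bound $|\gamma_*|\,|\psi(x_1)-\psi(x_2)|\le\operatorname{dist}_{\gamma_*}(\psi(x_1),\psi(x_2))\le\operatorname{diam}_{\gamma_*}(\psi(B(x_1,r)))$ from Proposition \ref{pro2.2} gives $|\psi(x_1)-\psi(x_2)|\le \gamma_*^{-1}C d^{-\alpha}|x_1-x_2|^\alpha$; if $r>R_0$ one bounds $|\psi(x_1)-\psi(x_2)|\le 2\|\psi\|_{C^0}\le 2\|\psi\|_{C^0} R_0^{-\alpha}|x_1-x_2|^\alpha$, which is again of the form $C d^{-\alpha}|x_1-x_2|^\alpha$ after absorbing $R_0\sim d$. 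Taking the supremum over $x_1\neq x_2$ gives $[\psi]_{\alpha;\Omega_*'}\le C d^{-\alpha}$.

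The only mildly delicate point is bookkeeping the constant dependences: one must check that $\delta_0$, and therefore $\alpha$, truly depend only on $n,\psi(\overline{\Omega_*}),\gamma_*,\sigma_*,\nu,\operatorname{diam}\Omega_*$ (this is exactly the conclusion of Corollary \ref{lem2.2}), while the multiplicative constant $C$ is allowed the extra dependence on $\|\psi\|_{C^0(\overline{\Omega_*})}$ coming from bounding $\omega(R_0)$ and from the $r>R_0$ regime; the conversion constants $\kappa_1,\kappa_2$ from Corollary \ref{cor2.3} depend only on a radius controlling $\psi(\overline{\Omega_*})$, hence only on $n$ and $\psi(\overline{\Omega_*})$, so they are harmless. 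There is no real analytic obstacle here — the substance of the theorem is entirely contained in the covering/oscillation-decay step already carried out in Theorem \ref{lem2.1} and Corollary \ref{lem2.2}; this proof is just the routine iteration-to-Hölder packaging, done carefully in the non-homogeneous metric $\operatorname{dist}_{\gamma_*}$.
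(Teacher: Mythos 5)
Your proof is correct and follows essentially the same route as the paper's: iterate Corollary \ref{lem2.2} to obtain the geometric decay of $\operatorname{diam}_{\gamma_*}(\psi(B(y,r)))$, set $\alpha=\log_{\delta_0}(1/2)$, and then convert back to a Euclidean oscillation bound; the only cosmetic difference is that you use the crude lower bound $|\gamma_*|\,|\psi(x_1)-\psi(x_2)|\leq\operatorname{dist}_{\gamma_*}(\psi(x_1),\psi(x_2))$ from Proposition \ref{pro2.2}, whereas the paper uses the componentwise estimate $\operatorname{diam}_{\gamma_*}(\psi(B_R))\geq 2nM\max_l\operatorname{osc}_{B_R}\psi^l$ — both work equally well, and your bookkeeping observation that the $\gamma_*$-factors cancel between $\omega(R_0)$ and the $\gamma_*^{-1}$ conversion is exactly what keeps $C$ free of $\gamma_*$.
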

\begin{proof}
For convenience, we abbreviate $B(y,R)$ by $B_{R}$ for all $R\in(0,d]$, where $y$ is any fixed point in $\Omega_{*}'$. 
For any $R\leq d$, select $m\in\mathbb{N}$ such that 
\begin{equation}\label{eqn6.1}
\delta_{0}^{m}d<R\leq\delta_{0}^{m-1}d,
\end{equation}
where $\delta_{0}$ is given in Corollary \ref{lem2.2}. 
It follows from Corollary \ref{lem2.2} that 
\[
\operatorname{diam}_{\gamma_{*}}(\psi(B_{R}))\leq\operatorname{diam}_{\gamma_{*}}\big(\psi\big(B_{\delta_{0}^{m-1}d}\big)\big)
\leq\frac{1}{2}\max\left\{\operatorname{diam}_{\gamma_{*}}\big(\psi\big(B_{\delta_{0}^{m-2}d}\big)\big),2\delta_{0}^{m-2}d\right\}.
\]
Applying Corollary \ref{lem2.2} we get the iteration relation that for any $k\in\{1,\dots,m-2\}$, 
\begin{align*}
\max\left\{\operatorname{diam}_{\gamma_{*}}\big(\psi\big(B_{\delta_{0}^{k}d}\big)\big),2\delta_{0}^{k}d\right\}
&\leq\max\left\{\frac{1}{2}\max\left\{\operatorname{diam}_{\gamma_{*}}\big(\psi\big(B_{\delta_{0}^{k-1}d}\big)\big),2\delta_{0}^{k-1}d\right\}
,2\delta_{0}^{k}d\right\}\\
&=\frac{1}{2}\max\left\{\operatorname{diam}_{\gamma_{*}}\big(\psi\big(B_{\delta_{0}^{k-1}d}\big)\big),2\delta_{0}^{k-1}d,4\delta_{0}^{k}d\right\}\\
&=\frac{1}{2}\max\left\{\operatorname{diam}_{\gamma_{*}}\big(\psi\big(B_{\delta_{0}^{k-1}d}\big)\big),2\delta_{0}^{k-1}d\right\}
\end{align*}
by the fact $\delta_{0}<1/2$. 
Thus 
\begin{equation}\label{eqn6.2}
\begin{split}
\operatorname{diam}_{\gamma_{*}}(\psi(B_{R}))&\leq\frac{1}{2^{m-1}} \max\left\{\operatorname{diam}_{\gamma_{*}}\big(\psi\big(B_{d}\big)\big),2d\right\}\\
&=\left(\delta_{0}^{m}\right)^{\log_{\delta_{0}}\hspace{-0.8mm}\frac{1}{2}} \max\left\{2\operatorname{diam}_{\gamma_{*}}\big(\psi\big(B_{d}\big)\big),4d\right\}\\
&\leq \left(\delta_{0}^{m}\right)^{\log_{\delta_{0}}\hspace{-0.8mm}\frac{1}{2}} \left(2\big(8nM^{2}+2M^{2}\big)+4\operatorname{diam}\Omega_{*}\right). 
\end{split}
\end{equation}
Define 
\[
\alpha:=\log_{\delta_{0}}\hspace{-0.8mm}\frac{1}{2},
\]
a constant depending on $n$, $\psi(\overline{\Omega_{*}})$, $\gamma_{*}$, $\sigma_{*}$, $\nu$ and $\operatorname{diam}\Omega_{*}$. 
By virtue of \eqref{eqn6.1} and \eqref{eqn6.2}, we establish the key estimate
\begin{equation}\label{eqn2.22}
\operatorname{diam}_{\gamma_{*}}(\psi(B_{R}))\leq \left(16nM^{2}+4M^{2}+4\operatorname{diam}\Omega_{*}\right)d^{-\alpha}R^{\alpha}.
\end{equation}
Let $k\in\{1,\dots,n\}$ be the index satisfying 
\[
\mathop{\operatorname{osc}}_{B_{R}}\psi^{k}=\max_{1\leq l\leq n}\mathop{\operatorname{osc}}_{B_{R}}\psi^{l}.
\]
From \eqref{eqn2.111} and the condition $\gamma_{*}\geq4nM$, we derive  
\begin{equation}\label{eqn2.23}
\operatorname{diam}_{\gamma_{*}}(\psi(B_{R}))\geq \gamma_{*}\mathop{\operatorname{osc}}_{B_{R}}\psi^{k}-2M\sum_{l=1}^{n}\mathop{\operatorname{osc}}_{B_{R}}\psi^{l}\geq 2nM\max_{1\leq l\leq n}\mathop{\operatorname{osc}}_{B_{R}}\psi^{l}.
\end{equation}
Therefore, the combination of \eqref{eqn2.22} and \eqref{eqn2.23} yields 
\[
\mathop{\operatorname{osc}}_{B(y,R)}\psi^{l}\leq \frac{8nM^{2}+2M^{2}+2\operatorname{diam}\Omega_{*}}{nM}d^{-\alpha}R^{\alpha},\ \ \forall R\leq d,\ \forall1\leq l\leq n,\ \forall y\in\Omega_{*}'.
\]
This completes the proof of Theorem \ref{thm1.1}.
\end{proof}

\section{Quasilinear elliptic equation}

\begin{proof}[Proof of Theorem \ref{thm1.11}] 
Define
\[
v^{k,\gamma}(x):=\gamma D_{k}u(x)+|Du(x)|^{2},\ \ \forall x\in\Omega,\  \ \forall\gamma\in\mathbb{R},\ k\in\{1,\dots,n\},
\]
and for convenience, we abbreviate $v^{k,\gamma}$ by $v$.
Then it follows from Chapter 13.3 of \cite{GT} that $v$ satisfies in the generalized sense
\begin{equation}\label{eqn2.5}
D_{i}\left(a^{ij}D_{j}v\right)\geq g+D_{i}f^{i}\ \ \text{in}\ \Omega,
\end{equation}
where
\begin{align*}
&a^{ij}(x):=e^{2\chi v(x)}\hspace{0.1mm}A^{ij}(x,u(x),Du(x)),\\
&f^{i}(x):=-e^{2\chi v(x)}\hspace{0.1mm}\tilde{f}^{i}(x),\\
&g(x):=\frac{-e^{2\chi v(x)}}{\lambda(x,u(x),Du(x))}\left(\chi\sum_{i}\left|\tilde{f}^{i}(x)\right|^{2}+\sum_{i,j}\left|B^{ij}(x)\right|^{2}+\sum_{j}\left|\tilde{g}^{j}(x)\right|^{2}\right),
\end{align*}
and 
\begin{align*}
&\chi:=\sup_{x\in\Omega}\left(1+\frac{1}{\lambda^{2}(x,u(x),Du(x))}\sum_{i,j,l}\left|\left(D_{p_{l}}A^{ij}-D_{p_{j}}A^{il}\right)(x,u(x),Du(x))\right|^{2}\right),\\
&\tilde{f}^{i}(x):=\left(2D_{i}u(x)+\gamma\delta^{ik}\right)B(x,u(x),Du(x)),\\
&B^{ij}(x):=\left[\left(2D_{l}u(x)+\gamma\delta^{lk}\right)\delta_{l}A^{ij}-2\delta^{ij}B\right](x,u(x),Du(x)),\\
&\tilde{g}^{j}(x):=\delta_{i}A^{ij}(x,u(x),Du(x)),
\end{align*}
where the differential operator $\delta_{l}$ is given by 
\[
\delta_{l}h(x,z,p):=D_{x_{l}}h(x,z,p)+p_{l}D_{z}h(x,z,p).
\]
By virtue of \eqref{eqn9} and \eqref{eqn2.5} we get Assumption \ref{lem12}, which together with Theorem \ref{thm1.1} yields Theorem \ref{thm1.11}.
\end{proof}
\par
\vspace{2mm}
For boundary $C^{1,\alpha}$ estimates, using the flattening mapping, we need only consider the neighborhood of a flat boundary portion.
For any $k\in\{1,\dots,n-1\}$ and $\gamma\in\mathbb{R}$, it follows from Chapter 13.4 of \cite{GT} that 
\[
\gamma D_{k}u+\sum_{i=1}^{n-1}\left|D_{i}u\right|^{2}\ \ \text{is a subsolution of a linear equation in divergence form.}
\]
Applying the boundary weak Harnack inequality, we conclude from Section 3 that  the boundary H\"older estimate holds for $D_{i}u,\ 1\leq i\leq n-1$.
The remaining estimate of $D_{n}u$ follows from the same argument as those in Chapter 13.4 of \cite{GT}.

\section{Fully nonlinear elliptic equation}

Note that for quasilinear equations, there also exists the weak Harnack inequality, namely Theorem 9 of \cite{Trudinger1980}.
Thus replacing Assumption \ref{lem12} and \eqref{eqn2.10} with Assumption \ref{lem13} (provided below) and the weak Harnack inequality \cite{Trudinger1980} in the argument of Section 3 yields a result corresponding to Theorem \ref{thm1.1}.

\begin{assumption}\label{lem13}
Let $\psi(x)$ be an $n$-dimensional vector-valued function in $\Omega$, and $\psi^{k}(x)$ denote its $k$-th component for all $1\leq k\leq n$.
Assume that $\psi\in W^{2,n}(\Omega)$, and that for any $k\in\{1,\dots,n\}$, the functions 
\[
v_{\pm}^{k}(x):=\pm\gamma\psi^{k}(x)+|\psi(x)|^{2}\ \ \text{in}\ \overline{\Omega}
\]
are subsolutions of equations \eqref{eqn1} for some $\gamma\geq 4n\max\{\sup_{\Omega}|\psi|,1\}$.  
Furthermore, assume the coefficients of \eqref{eqn1} satisfy the following conditions in $\Omega\times\mathbb{R}\times\mathbb{R}^{n}$:  
\[
A^{ij},\ B\ \text{are measurable};\ \ \frac{\Lambda}{\lambda}\leq\sigma;\ \ B(x,z,p)\leq\lambda\left(b_{0}|p|^{2}+b|p|+g\right),
\]
where $\lambda$ and $\Lambda$ denote the minimal and maximal eigenvalues of the positive definite matrix $[A^{ij}]$, respectively; $\sigma$ and $b_{0}$ are non-negative constants independent of $k$; and $b\in L^{2n}(\Omega)$, $g\in L^{n}(\Omega)$ are non-negative functions independent of $k$.  
\end{assumption}

For fully nonlinear elliptic equations 
\begin{equation}\label{eqn2}
F\big(x,u,Du,D^{2}u\big)=0\ \ \text{in}\ \Omega,
\end{equation}
assume that there exist continuous functions $\theta_{2}$ and $\Theta_{2}$ such that  for any constant $\rho>0$,
\begin{equation}\label{eqn7}
\frac{\Lambda}{\lambda}\leq\theta_{2}(\rho)\ \ \text{and}\ \ \frac{\left|F_{x}\right|+\left|F_{z}\right|+\left|F_{p}\right|}{\lambda}(x,z,p,r)\leq\Theta_{2}(\rho)(1+|r|)
\end{equation}
in $\left\{(x,z,p,r)\in\Omega\times\mathbb{R}\times\mathbb{R}^{n}\times\mathscr{S}^{n}:\ |z|+|p|\leq\rho\right\}$, where $\lambda$ and $\Lambda$ denote the minimal and maximal eigenvalues of the positive definite matrix $[F^{ij}]$, respectively, and  $\mathscr{S}^{n}$ denotes the space of real symmetric $n\times n$ matrices.
The following theorem was established in \cite{Trudinger1983}, and below we present a new proof of this result.

\begin{theorem}\label{thm1.12}
Let $u\in W^{3,n}(\Omega)$ satisfy the equation \eqref{eqn2} with $F\in C^{1}(\Omega\times\mathbb{R}\times\mathbb{R}^{n}\times\mathscr{S}^{n})$, and let the condition \eqref{eqn7} hold. 
Then for any $\Omega'\subset\subset\Omega$ we have H\"older seminorm estimates
\[
[Du]_{\alpha;\Omega'}\leq Cd^{-\alpha},
\]
where $d=\operatorname{dist}(\Omega',\partial\Omega)$, $\alpha=\alpha\big(n,\|u\|_{C^{1}(\overline{\Omega})},\theta_{2},\Theta_{2},\operatorname{diam}\Omega\big)$,  $C=C\big(n,\|Du\|_{C^{0}(\overline{\Omega})},\operatorname{diam}\Omega\big)$.
\end{theorem}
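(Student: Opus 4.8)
\textbf{Proof proposal for Theorem \ref{thm1.12}.}
The plan is to mimic the structure of the quasilinear case: differentiate the equation to produce, for each fixed direction $k$ and parameter $\gamma$, a scalar function $v^{k,\gamma}=\gamma D_{k}u+|Du|^{2}$ that satisfies a differential inequality to which a weak Harnack inequality applies, and then quote the abstract H\"older machinery of Section 3 (with Assumption \ref{lem13} replacing Assumption \ref{lem12} and the weak Harnack inequality of \cite{Trudinger1980} replacing \eqref{eqn2.10}). First I would differentiate \eqref{eqn2} in the direction $e_{k}$. Since $u\in W^{3,n}$ and $F\in C^{1}$, the function $w:=D_{k}u$ satisfies, in the generalized sense, $F^{ij}D_{ij}w+F_{p_{l}}D_{l}w+F_{z}w+F_{x_{k}}=0$, i.e.\ a linear nondivergence equation with measurable coefficients whose ellipticity ratio is controlled by $\theta_{2}$ via \eqref{eqn7}, and whose lower-order/right-hand-side terms are controlled by $\Theta_{2}(\rho)(1+|D^{2}u|)$ with $\rho=\|u\|_{C^{1}(\overline{\Omega})}$.

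Next, following the computation that in the quasilinear case produces \eqref{eqn2.5}, I would form $v=\gamma D_{k}u+|Du|^{2}=\gamma w+\sum_{l}(D_{l}u)^{2}$ and compute $F^{ij}D_{ij}v$. The point of the quadratic term $|Du|^{2}$ is the familiar one: $F^{ij}D_{ij}\bigl(\sum_{l}(D_{l}u)^{2}\bigr)=2\sum_{l}D_{l}u\,F^{ij}D_{ij}D_{l}u+2\sum_{l}F^{ij}D_{il}u\,D_{jl}u$, and the last term is $\geq 2\lambda\,|D^{2}u|^{2}$ by ellipticity (up to the usual bookkeeping with $\sum_{l}F^{ij}D_{il}u\,D_{jl}u\geq \lambda\sum_{l,i}(D_{il}u)^{2}$). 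This nonnegative quadratic-in-$D^{2}u$ term is what absorbs the $|D^{2}u|$-growth of the differentiated-equation right-hand sides coming from $F_{x}$, $F_{z}$, $F_{p}$ and from the cross terms $2D_{l}u\,F_{p_{m}}D_{m}D_{l}u$ — one uses Cauchy–Schwarz / Young's inequality, choosing the weight so that the resulting coefficient $b_{0}$ of $|Du|^{2}$-type terms and the $L^{2n}$, $L^{n}$ norms of the remaining terms are controlled by $\theta_{2}$, $\Theta_{2}$, $n$ and $\|u\|_{C^{1}(\overline{\Omega})}$. The upshot should be exactly the structural hypothesis of Assumption \ref{lem13}: $v^{k,\gamma}_{\pm}=\pm\gamma D_{k}u+|Du|^{2}$ is a subsolution of a fully nonlinear (here in fact linear nondivergence) inequality with $B(x,z,p)\leq\lambda(b_{0}|p|^{2}+b|p|+g)$, $b\in L^{2n}$, $g\in L^{n}$, and $\Lambda/\lambda\leq\sigma$, with $\gamma$ taken $\geq 4n\max\{\sup_{\Omega}|Du|,1\}$ as required. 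Note $\psi:=Du\in W^{2,n}(\Omega)$ since $u\in W^{3,n}$, so the regularity requirement of Assumption \ref{lem13} holds.

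With Assumption \ref{lem13} verified for $\psi=Du$ on $\Omega$, I would invoke the variant of Theorem \ref{thm1.1} announced at the start of Section 5 — obtained by running the covering argument of Section 3 verbatim but using the weak Harnack inequality for nondivergence subsolutions (Theorem 9 of \cite{Trudinger1980}) in place of \eqref{eqn2.10} — to conclude $[\psi]_{\alpha;\Omega'}\leq Cd^{-\alpha}$ with the stated dependence of $C$ and $\alpha$. One technical point to check is that the weak Harnack inequality of \cite{Trudinger1980} applies with a right-hand side in $L^{n}$ plus a quadratic gradient term $b_{0}|Du|^{2}$; since $Du\in C^{0}(\overline{\Omega})$ is already known to be bounded (it appears in the constant $C$), the term $b_{0}|Du|^{2}$ is just a bounded function and is harmless, and $b\,|Du|$ with $b\in L^{2n}$ is likewise in $L^{n}$ — so after freezing $\rho=\|u\|_{C^{1}(\overline{\Omega})}$ the relevant norms are finite and the constant $K$ in the analogue of \eqref{eqn2.10} depends only on the advertised quantities.

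\textbf{Main obstacle.} I expect the only genuinely delicate step to be the differentiation-and-absorption computation of the second paragraph: producing $v^{k,\gamma}$ as a subsolution in a form matching Assumption \ref{lem13} requires carefully tracking how the $|D^{2}u|$-growth in \eqref{eqn7} — which is worse than in the quasilinear case, where $D_{p}A$, $D_{x}A$, $D_{z}A$ are bounded — is exactly compensated by the coercive term $2\lambda|D^{2}u|^{2}$, and checking that the leftover terms genuinely land in $L^{2n}$ and $L^{n}$ with constants independent of $k$ and $\gamma$ (for $\gamma$ in a bounded range dictated by $\|Du\|_{C^{0}}$). Everything else is either a direct appeal to the abstract Section 3 result or a routine verification that the hypotheses of the weak Harnack inequality in \cite{Trudinger1980} are met; I would also remark, as in the quasilinear case, that boundary $C^{1,\alpha}$ estimates follow by the same flattening-plus-boundary-weak-Harnack argument, though the theorem as stated is interior only.
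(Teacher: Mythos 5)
Your overall strategy matches the paper's proof: form $v^{k,\gamma}=\gamma D_{k}u+|Du|^{2}$, show that $v^{k,\gamma}_{\pm}$ satisfies a nondivergence differential inequality with natural (quadratic gradient) growth, verify Assumption \ref{lem13} with $\psi=Du$, and invoke the Section 3 covering argument with the weak Harnack inequality of \cite{Trudinger1980} in place of GT Theorem 8.18. The paper simply cites \cite{Trudinger1983} for the differential inequality \eqref{eqn8} rather than rederiving it, but your sketch of the derivation is the right idea.

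There is, however, a genuine conceptual error in your final paragraph. In Assumption \ref{lem13} the variable $p$ in $B(x,z,p)\leq\lambda(b_{0}|p|^{2}+b|p|+g)$ is the gradient of the \emph{unknown}, namely $v^{k,\gamma}$, not of $u$. So the quadratic gradient term in \eqref{eqn8} is $b_{0}|Dv^{k,\gamma}|^{2}$, and $D_{i}v^{k,\gamma}=\gamma D_{ik}u+2\sum_{l}D_{l}u\,D_{il}u$ involves $D^{2}u$, which under the hypothesis $u\in W^{3,n}$ lies only in $W^{1,n}\hookrightarrow\mathrm{BMO}$, not in $L^{\infty}$ (indeed, if $D^{2}u$ were bounded the theorem would be trivial). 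Your claim that the quadratic term is $b_{0}|Du|^{2}$, hence bounded and ``harmless'' because $Du\in C^{0}$, is a misidentification; if that were so, GT Theorem 8.18 would already suffice and there would be no reason to invoke \cite{Trudinger1980}. The whole point of replacing GT 8.18 by Trudinger's quasilinear weak Harnack is precisely to tolerate the \emph{unbounded} quadratic term $|Dv^{k,\gamma}|^{2}\sim|D^{2}u|^{2}$, which is admissible under the natural growth condition given the a priori $L^{\infty}$ bound $\|v^{k,\gamma}\|_{\infty}\leq\gamma\|Du\|_{\infty}+\|Du\|_{\infty}^{2}$. Relatedly, your ``absorption'' sketch does not exploit the key cancellation $\gamma F_{p_{m}}D_{mk}u+2\sum_{l}D_{l}u\,F_{p_{m}}D_{ml}u=F_{p_{m}}D_{m}v^{k,\gamma}$, which collects the dangerous first-order terms into a gradient-of-$v$ quantity; it is this regrouping, combined with Young's inequality against the coercive $2\lambda|D^{2}u|^{2}$, that produces exactly the structural form \eqref{eqn8} with its $|Dv^{k,\gamma}|^{2}$.
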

\begin{remark}
By the Sobolev embedding theorem, $u\in W^{3,n}(\Omega)$ implies $u\in C^{1}(\overline{\Omega})$.
\end{remark}

\begin{proof}[Proof of Theorem \ref{thm1.12}]
Define
\[
v^{k,\gamma}(x):=\gamma D_{k}u(x)+|Du(x)|^{2},\ \ \forall x\in\overline{\Omega},\  \ \forall\gamma\in\mathbb{R},\ k\in\{1,\dots,n\}.
\]
By virtue of \eqref{eqn7} we get from \cite{Trudinger1983} that $v^{k,\gamma}$ satisfies 
\begin{equation}\label{eqn8}
\widetilde{F}^{ij}(x)D_{ij}v^{k,\gamma}+b_{0}\tilde{\lambda}(x)\left(\left|Dv^{k,\gamma}\right|^{2}+1\right)\geq0\ \ \text{a.e. in}\ \Omega,
\end{equation}
where $\widetilde{F}^{ij}(x):=F^{ij}(x,u(x),Du(x),D^{2}u(x))$, $\tilde{\lambda}(x)$ is the minimal eigenvalue of $[\widetilde{F}^{ij}(x)]$, and the constant $b_{0}\geq0$ depends on $\gamma$, $\|Du\|_{C^{0}(\overline{\Omega})}$, $\Theta_{2}\big(\|u\|_{C^{1}(\overline{\Omega})}\big)$.
It follows from \eqref{eqn7} and \eqref{eqn8} that Assumption \ref{lem13} holds.
Therefore, we obtain Theorem \ref{thm1.12} by applying the result corresponding to Theorem \ref{thm1.1}.
\end{proof}
\par
\vspace{2mm}
There is a geometrical proof of an interior $C^{1,\alpha}$ estimate for Lipschitz viscosity solutions given by Caffarelli and Wang \cite{CW}, in the spirit of the regularity theory for free boundaries.

\section*{Acknowledgements}

The research is supported by National Natural Science Foundation of China (Grant No. 12471202), and Shaanxi Fundamental Science Research Project for Mathematics and Physics (Grant No. 22JSZ003).

\end{document}